\documentclass[a4paper]{amsart}
\usepackage[utf8]{inputenc}
\usepackage{tikz}

\title[Invariant measures on products and on LO]{Invariant measures on products and on the space of linear orders}

\author{Colin Jahel}
\address{
  Institut Camille Jordan \\
  Universit\'e Claude Bernard Lyon 1 \\
  Universit\'e de Lyon \\  
  43, boulevard du 11 novembre 1918 \\
  69622 Villeurbanne \textsc{cedex} \\
  France}
\email{colin.jahel@ens-lyon.fr}

\author{Todor Tsankov}
\address{
  Institut Camille Jordan \\
  Universit\'e Claude Bernard Lyon 1 \\
  Universit\'e de Lyon \\
  43, boulevard du 11 novembre 1918 \\
  69622 Villeurbanne \textsc{cedex} \\
  France
  -- and --
  Institut Universitaire de France}
\email{tsankov@math.univ-lyon1.fr}


\makeatletter
\@namedef{subjclassname@2020}{\textup{2020} Mathematics Subject Classification}
\makeatother

\subjclass[2020]{Primary 37A50; Secondary 03C15}
\keywords{invariant measures, uniquely ergodic, linear orders, $\aleph_0$-categorical, de Finetti theorem}

\usepackage[T1]{fontenc}
\usepackage[osf]{mathpazo}
\usepackage{amssymb} 

\usepackage{eucal}

\usepackage{hyperref}
\usepackage{my-macros}

\usepackage{enumitem}
\setlist[enumerate,1]{label=(\roman*), font=\normalfont}

\usepackage[initials, shortalphabetic]{amsrefs}

\newcommand{\muu}{\mu_{\mathrm{u}}}
\newcommand{\Perp}[1][]{\mathrel{
    \mathop{
      \scalebox{1.3}{\text{$\perp$}}
      }\displaylimits_{#1}}}

  
\begin{document}

\begin{abstract}
  Let $M$ be an $\aleph_0$-categorical structure and assume that $M$ has no algebraicity and has weak elimination of imaginaries. Generalizing classical theorems of de Finetti and Ryll-Nardzewski, we show that any ergodic, $\operatorname{Aut}(M)$-invariant measure on $[0, 1]^M$ is a product measure. We also investigate the action of $\operatorname{Aut}(M)$ on the compact space $\mathrm{LO}(M)$ of linear orders on $M$. If we assume moreover that the action $\operatorname{Aut}(M) \curvearrowright M$ is transitive, we prove that the action $\operatorname{Aut}(M) \curvearrowright \mathrm{LO}(M)$ either has a fixed point or is uniquely ergodic.
\end{abstract}

\maketitle

\section{Introduction}
\label{sec:introduction}

In recent years, the study of dynamical systems of automorphism groups of homogeneous structures has become an important topic at the intersection of dynamics, combinatorics, probability theory, and model theory and it has uncovered many interesting connections between these fields. Countable homogeneous structures are obtained as \emph{Fraïssé limits} of a class of finite structures satisfying certain conditions (called a \df{Fraïssé class}) and there is a close correspondence between dynamical properties of the automorphism group of the limit structure and combinatorial properties of the class. Typical examples of Fraïssé classes are the class of finite graphs (the limit is the random graph), the class of finite triangle-free graphs, and the class of finite linear orders (here the limit is the countable, dense linear order without endpoints $(\Q, <)$).

In this paper, we will be interested in the invariant probability measures on dynamical systems of the automorphism group $\Aut(M)$ of a homogeneous structure $M$. More precisely, we will consider two specific systems: products of the type $Z^M$, where $Z$ is a standard Borel space, and the compact space $\LO(M)$ of all linear orders on $M$.

Our study of invariant measures on product spaces of the type $Z^M$ is inspired by the classical de Finetti theorem. One formulation of this theorem is that the only ergodic measures on $Z^M$ invariant under the full symmetric group $\Sym(M)$ are product measures of the type $\lambda^M$, where $\lambda$ is some probability measure on $Z$. (Recall that a measure is \df{ergodic} if the only elements of the measure algebra fixed by the group are the empty set and the whole space. Equivalently, the ergodic measures are the extreme points of the convex set of all invariant probability measures.) In our first result, we obtain the same conclusion as in de Finetti's theorem under a weaker hypothesis: that the measure is invariant under the much smaller group $\Aut(M)$, provided that the structure $M$ satisfies certain model-theoretic conditions. We will say that a structure $M$ is \df{transitive} if the action $\Aut(M) \actson M$ is transitive.

\begin{theorem}
  \label{th:intro:deFinetti}
  Let $M$ be an $\aleph_0$-categorical, transitive structure with no algebraicity that admits weak elimination of imaginaries. Let $Z$ be a standard Borel space and consider the natural action $\Aut(M) \actson Z^M$. Then the only invariant, ergodic probability measures on $Z^M$ are product measures of the form $\lambda^M$, where $\lambda$ is a probability measure on $Z$.
\end{theorem}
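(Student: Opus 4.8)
The plan is to prove that under $\mu$ the coordinate random variables $(z_a)_{a \in M}$ are mutually independent with common law $\lambda$; since a measure on $Z^M$ is determined by its finite-dimensional marginals, this yields $\mu = \lambda^M$. First I would reduce to $Z = [0,1]$ via a Borel isomorphism, and note that transitivity of $\Aut(M) \actson M$ forces all one-dimensional marginals of $\mu$ to coincide with a single probability measure $\lambda$ (for $a, b \in M$ choose $g \in G$ with $ga = b$ and apply invariance). By a routine induction on the size of finite tuples, it then suffices to establish the following \emph{decorrelation} statement: for every finite $A \subseteq M$ and every $a \in M \setminus A$, the coordinate $z_a$ is independent of the $\sigma$-algebra $\mathcal{B}_A := \sigma(z_c : c \in A)$, with marginal $\lambda$.

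To analyse this, fix a bounded measurable $g \colon Z \to \mathbb{R}$ with $\int g \, d\lambda = 0$ and set $\phi := \mathbb{E}_\mu[g(z_a) \mid \mathcal{B}_A]$; independence amounts to showing $\phi = 0$. The pointwise stabilizer $G_{(A)}$ fixes every $\mathcal{B}_A$-measurable function and preserves $\mu$, so for each $h \in G_{(A)}$ one obtains $\mathbb{E}_\mu[g(z_{h^{-1}a}) \mid \mathcal{B}_A] = \phi$; that is, the conditional expectation $\mathbb{E}_\mu[g(z_{a'}) \mid \mathcal{B}_A]$ is the \emph{same} function $\phi$ for every $a'$ in the $G_{(A)}$-orbit of $a$. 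Here the hypothesis of no algebraicity enters decisively: since $\operatorname{acl}(A) = A$, this orbit is infinite, so we obtain infinitely many coordinates $a'$ whose conditional expectations over $\mathcal{B}_A$ all coincide. One can then average $\frac1N \sum_{i=1}^N g(z_{a_i})$ over $N$ distinct such coordinates and, the averages being bounded in $L^2$, pass to a weak-$L^2$ limit $\Psi$ satisfying $\mathbb{E}_\mu[\Psi \mid \mathcal{B}_A] = \phi$.

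The final step should identify $\Psi$ as a constant, forcing $\phi = \mathbb{E}_\mu[\Psi \mid \mathcal{B}_A]$ to equal $\int g \, d\lambda = 0$. This is where $\aleph_0$-categoricity and weak elimination of imaginaries are needed: $\aleph_0$-categoricity (Ryll--Nardzewski) guarantees finitely many orbits on each $M^n$, so the pairwise correlations $\langle g(z_{a_i}), g(z_{a_j}) \rangle$ take finitely many values, and the orbit of $a$ over $A$ can be thinned (by the infinite Ramsey theorem) to an indiscernible set on which these correlations are a single constant $\sigma$, making $\|\frac1N \sum_i g(z_{a_i})\|^2 \to \sigma$ and rendering the averaging argument effective; weak elimination of imaginaries is what should let one pin down the $G_{(A)}$-invariant (and, more generally, the relevant asymptotically invariant) $\sigma$-algebra as precisely $\mathcal{B}_A$, so that, combined with ergodicity of $\mu$, the limit $\Psi$ is measurable with respect to the trivial invariant algebra and hence constant. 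I expect the main obstacle to lie exactly in this decorrelation / relative-ergodicity step: because $\Aut(M)$ is typically non-amenable, one cannot simply invoke a F{\o}lner mean-ergodic theorem to turn orbit averages into invariant functions, and the oligomorphic structure (finitely many types at each level, weak elimination of imaginaries, absence of algebraicity) must serve as a substitute for the full exchangeability that powers the classical de Finetti theorem --- either by reducing to classical exchangeability on a sufficiently homogeneous indiscernible subset, or through the structure theory of the Koopman representation of $G$ on $L^2(\mu)$.
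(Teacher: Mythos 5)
Your reductions are sound: passing to $Z=[0,1]$, using transitivity for the common marginal $\lambda$, reducing to the decorrelation statement, and observing that all coordinates in the $G_A$-orbit of $a$ have the same conditional expectation $\phi$ over $\mathcal{B}_A$ (with no algebraicity guaranteeing this orbit is infinite). Your Ramsey step also works in outline: since $\aleph_0$-categoricity gives finitely many $k$-types over the finite set $A$ for every $k$, iterated Ramsey plus diagonalization thins the orbit to an $A$-indiscernible sequence $(a_i)$, and then $(z_{a_i})$ is spreadable jointly with $(z_c)_{c\in A}$, hence by the classical Ryll-Nardzewski/de Finetti theorem exchangeable and conditionally i.i.d.\ over its tail $\sigma$-field $\mathcal{T}$; your weak limit is then $\Psi=\mathbb{E}[g(z_{a_1})\mid\mathcal{T}]$. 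The gap is the final step, and it is not a technical detail but the entire content of the theorem: ergodicity only forces $\Psi$ to be constant if $\Psi$ is measurable with respect to the $G$-invariant $\sigma$-field, i.e.\ if the tail field $\mathcal{T}$ of your thinned sequence is (mod null sets) contained in the invariant field. You offer no argument for this beyond saying that weak elimination of imaginaries ``should let one pin down'' the relevant $\sigma$-algebra, which is a restatement of what must be proved. Note also that your correlation computation is circular as a proof of constancy: with constant pairwise correlation $\sigma$ one gets $\|\Psi\|_2^2=\sigma$ and $\mathbb{E}\Psi=0$, so ``$\Psi$ is constant'' is literally the statement $\sigma=0$, i.e.\ the decorrelation you are trying to establish.

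This step genuinely fails without a substantive input from weak elimination of imaginaries, and the paper's own counterexample shows it concretely. In the bipartite example of Section~\ref{sec:examples-1} (elements of $M$ have exactly two ``neighbors'' among imaginary points, $\xi_a=\min\{\eta_b: a\mathrel{R}b\}$ with $\eta_b$ i.i.d.), the measure is invariant and ergodic and the structure is transitive, $\aleph_0$-categorical, primitive, with no algebraicity --- every hypothesis your soft steps use --- yet the measure is not a product. Running your argument there with $A=\{a'\}$ and $a$ sharing a neighbor with $a'$: any infinite $A$-indiscernible sequence inside the $G_{A}$-orbit of $a$ necessarily passes through one common imaginary neighbor $b'$ of $a'$, and the limit $\Psi=\mathbb{E}[g(\xi_{a_1})\mid\eta_{b'}]$ is a nonconstant function of $\eta_{b'}$ despite ergodicity: the tail field is nontrivial because it sees the imaginary element $b'$. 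So ``invariance of $\Psi$'' is exactly where WEI must act, and proving it is where all the work lies. The paper does this by an entirely different route: the classification of unitary representations of $\operatorname{Aut}(M)$ (Fact~\ref{f:representations}, relying on Tsankov's theorem), the explicit computation of fixed subspaces in $\ell^2(M^{\mathrm{eq}})$ for algebraically closed sets (Proposition~\ref{p:repr-independent}), and the identity $\operatorname{acl}^{\mathrm{eq}}A\cap\operatorname{acl}^{\mathrm{eq}}B=\operatorname{dcl}^{\mathrm{eq}}(A\cap B)$ (Lemma~\ref{l:no-algeb-and-welim}), which together give the conditional independence theorem (Theorem~\ref{th:general-independence}) applied to the Koopman representation of the action on $(Z^M,\mu)$. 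Your closing remark about the Koopman representation points in the right direction, but as written the proposal is missing its central step.
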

We will discuss the model-theoretic hypotheses of the theorem in detail in the next section, where we give all relevant definitions. The proof can be found in Section~\ref{sec:general-de-finett} (cf. Corollary~\ref{c:deFinetti}). Here we only remark that they are all necessary (with the possible exception of $\aleph_0$-categoricity) and that they are satisfied, for example, by the random graph, the homogeneous triangle-free graph, the dense linear order $(\Q, <)$, the universal, homogeneous partial order, and many other structures.

The ergodicity assumption in the theorem entails no loss of generality: one can obtain a description of all invariant measures using the ergodic decomposition theorem.


Theorem~\ref{th:intro:deFinetti} is a consequence of a more general independence result that applies to any measure-preserving action of $\Aut(M)$ on a probability space for any $\aleph_0$-categorical structure $M$ (cf. Theorem~\ref{th:general-independence}). The proof is based on representation theory and the results of \cite{Tsankov2012}.

Making use of Fraïssé's theorem, it is also possible to apply Theorem~\ref{th:intro:deFinetti} even in situations where there is no homogeneity or an obvious group present. For example, we can recover a theorem of Ryll-Nardzewski~\cite{RyllNardzewski1957}, which is another well-known strengthening of de Finetti's theorem; cf. Corollary~\ref{c:RyllNardzewski}.

Theorem~\ref{th:intro:deFinetti} was announced in the habilitation memoir of the second author \cite{Tsankov2014}. Later, some independent related work has been done by Ackerman~\cite{Ackerman2015p} and Crane--Towsner~\cite{Crane2018}. They consider a different class of homogeneous structures (with combinatorial assumptions on the amalgamation) and use completely different methods.

Next we consider $\Aut(M)$-invariant probability measures on the compact space $\LO(M)$ of linear orders on $M$. The systematic study of these measures was initiated by Angel, Kechris, and Lyons in  \cite{Angel2014}. Their main motivation comes from abstract topological dynamics. If $G$ is a topological group, a \df{$G$-flow} is a continuous action of $G$ on a compact Hausdorff space. A flow is \df{minimal} if every orbit is dense. It turns out that for every group $G$, there is a \df{universal minimal flow (UMF)} that maps onto every minimal flow of the group. In many cases (for example if $G$ is locally compact, non-compact), the universal minimal flow is a large, non-metrizable space that does not admit a concrete description; however, for many automorphism groups of homogeneous structures $M$, the UMF of $\Aut(M)$ is metrizable and can be explicitly computed. Moreover, in most known examples, it is a subflow of the flow $\LO(M)$ of all linear orders on $M$. In these situations, classifying the invariant measures on $\LO(M)$ gives information about \emph{all} minimal flows of the group as well as other properties of $G$ that can be expressed dynamically. One such property is amenability: a topological group $G$ is called \df{amenable} if every $G$-flow carries an invariant measure, or equivalently, if the UMF of $G$ has an invariant measure. Another property that goes down to factors of the UMF is unique ergodicity: if the UMF is uniquely ergodic, then so is every other minimal flow of the group. (Recall that a flow is \df{uniquely ergodic} if it carries a unique invariant measure, which then must be ergodic.) This latter property is quite interesting and is not encountered in classical dynamics: for example, Weiss~\cite{Weiss2012} has constructed, for every countable, infinite, discrete group, a minimal flow which is not uniquely ergodic and a similar construction was carried out in \cite{Jahel2020p} for locally compact second countable groups.

Giving interesting examples of groups with this unique ergodicity property was one of the main motivations of \cite{Angel2014}. They reduce the unique ergodicity problem to an equivalent question about finite structures (as is often done with Fraïssé limits) and then use techniques from probability theory to attack each specific case. It is interesting that their approach works in both directions, so if one manages to obtain an unique ergodicity results by other methods, this yields combinatorial information about the corresponding Fraïssé class. For example, if we denote by $R$ the random graph, the unique ergodicity of the flow $\Aut(R) \actson \LO(R)$ is equivalent to the uniqueness of a \df{consistent random ordering} on the class of finite graphs (see \cite{Angel2014} for more details). The work of Angel, Kechris, and Lyons was followed by several papers \cites{Pawliuk2020, Jahel2019p}, in which more unique ergodicity results of this type were proved; in particular, the automorphism groups of all homogeneous directed graphs from Cherlin's classification were treated in these two articles.

In the present paper, we adopt a different approach to the unique ergodicity problem on the space of linear orders, based on the generalization of de Finetti's theorem that we discussed above. It has the advantage of working under rather general model-theoretic assumptions (which are mostly necessary) and can also give information about the invariant measures even in the absence of unique ergodicity. Our main theorem is the following.
\begin{theorem}
  \label{th:intro:main} Let $M$ be a transitive, $\aleph_0$-categorical structure with no algebraicity that admits weak elimination of imaginaries. Consider the action $\Aut(M) \actson \LO(M)$. Then exactly one of the following holds:
  \begin{enumerate}
  \item The action $\Aut(M) \actson \LO(M)$ has a fixed point (i.e., there is a definable linear order on $M$);
  \item The action $\Aut(M) \actson \LO(M)$ is uniquely ergodic.
  \end{enumerate}
\end{theorem}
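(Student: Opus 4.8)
The plan is to produce a distinguished invariant measure $\muu$ on $\LO(M)$, show it is always ergodic, and then prove that in the absence of a fixed point it is the \emph{only} invariant measure. First I would dispatch the easy half, namely that (i) and (ii) are mutually exclusive: if there is a definable (equivalently, $G$-invariant) linear order $\prec$ on $M$, then $\delta_\prec$ and $\delta_{\prec^{\mathrm{op}}}$ are two distinct $G$-invariant measures (distinct since $M$ is infinite), so the action is not uniquely ergodic. Next I would construct $\muu$ as the image of $\lambda^M$, with $\lambda$ Lebesgue on $[0,1]$, under the $G$-equivariant map $\mathrm{ord}\colon [0,1]^M \to \LO(M)$ sending a family $(x_a)_a$ of pairwise distinct reals to the order $a < b \iff x_a < x_b$; since $\lambda$ is nonatomic the set of tuples with a repeated coordinate is $\lambda^M$-null, so $\mathrm{ord}$ is defined $\lambda^M$-a.e.\ and $\muu := \mathrm{ord}_* \lambda^M$ is a $G$-invariant probability measure. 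By Theorem~\ref{th:intro:deFinetti} the measure $\lambda^M$ is ergodic for $G \actson [0,1]^M$, hence so is its factor $\muu$. As $\muu$ exists unconditionally, there is always at least one invariant measure, and the content of the theorem is that it is unique exactly when there is no fixed point.

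For the main implication I would assume that $G \actson \LO(M)$ has no fixed point, let $\nu$ be an arbitrary ergodic invariant measure, and aim for $\nu = \muu$. The engine is Theorem~\ref{th:general-independence}, applied to the measure-preserving action $G \actson (\LO(M), \nu)$: because $\nu$ is ergodic the invariant $\sigma$-field is trivial, and the independence result specializes to the statement that the order patterns induced on model-theoretically independent (here, because there is no algebraicity, simply disjoint) finite tuples are \emph{mutually independent}. This ``dissociation'' of the random order, together with $\aleph_0$-categoricity (finitely many types) and transitivity, is the analogue for $\Aut(M)$ of the full exchangeability one would have under $\Sym(M)$, for which the uniform order is already known to be the unique invariant measure.

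The heart of the argument is to convert dissociation into $\nu = \muu$. I would do this by representing the $\nu$-random order by a $G$-equivariant real labeling built from auxiliary i.i.d.\ randomness. On $\LO(M) \times [0,1]^M$ with the invariant measure $\nu \otimes \lambda^M$ — a random order $<$ drawn from $\nu$ together with an independent field of i.i.d.\ uniform marks $(u_a)_a$ — I would construct an equivariant measurable map $\Xi\colon (<, (u_a)_a) \mapsto (\zeta_a)_a \in [0,1]^M$ with $\zeta_a < \zeta_b \iff a < b$ almost surely. The marks supply the independent per-point randomness needed to spread the order into genuine reals: a naive sequential assignment is impossible, since $\{u_a : a \in M\}$ is a.s.\ order-isomorphic to a dense subset of $[0,1]$ and admits no first element, and dissociation is what guarantees that an equivariant ``quantile'' of $a$ read off from the marks converges and is consistent across points. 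Granting such a $\Xi$, its push-forward $\Xi_*(\nu \otimes \lambda^M)$ is a $G$-invariant measure on the pure-shift system $[0,1]^M$; by Theorem~\ref{th:intro:deFinetti} and the ergodic decomposition it is a mixture of product measures $\kappa^M$, and since the labels $\zeta_a$ are a.s.\ pairwise distinct the relevant $\kappa$ are nonatomic, whence $\mathrm{ord}_* \kappa^M = \muu$ for each (the order induced by an i.i.d.\ sample from a continuous distribution is the uniform order). On the other hand $\mathrm{ord}_* \Xi_*(\nu \otimes \lambda^M) = \nu$ by construction, so $\nu = \muu$ and the action is uniquely ergodic.

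The main obstacle is precisely the construction of the equivariant order-respecting labeling $\Xi$, and this is where the model-theoretic hypotheses and the no-fixed-point assumption enter. The example $M = (\Q, <)$ with $\nu = \delta_<$ shows the construction can genuinely fail: there an equivariant $\Xi$ would, by the argument above, force $\delta_< = \muu$, which is absurd. Thus the obstruction to building $\Xi$ is real, and I expect that when it arises one can extract from it a nontrivial $G$-invariant piece of structure; weak elimination of imaginaries then pins this invariant datum down to a tuple of \emph{real} parameters, and no algebraicity together with transitivity upgrade the resulting invariant relation to a total, transitive, hence genuinely definable linear order on $M$ — that is, a fixed point of $G \actson \LO(M)$, contrary to hypothesis. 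Carrying out this extraction rigorously — identifying the correct invariant object, showing it is coded by reals via weak elimination of imaginaries, and verifying totality and transitivity of the induced order — is the crux of the proof.
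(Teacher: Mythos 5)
Your setup is sound and in fact parallels the paper's own strategy: the exclusivity of (i) and (ii), the construction of $\muu$ as $\mathrm{ord}_*\lambda^M$, and the reduction ``equivariant order-respecting labeling $\Rightarrow \nu = \muu$'' via Theorem~\ref{th:intro:deFinetti} and ergodic decomposition are all correct (modulo one small misattribution: Theorem~\ref{th:intro:deFinetti} says ergodic implies product, not that every product measure is ergodic; ergodicity of $\lambda^M$ needs a separate, easy argument, e.g.\ Proposition~\ref{p:indep-ergodicity} together with no algebraicity). But there is a genuine gap, and you name it yourself: you never construct the labeling $\Xi$, and you never carry out the extraction of a fixed point when the construction fails. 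These two steps are not routine verifications to be deferred; they are the entire content of the theorem, and the sketch you give for the second one points in a direction that I do not see how to make work.

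Here is what is actually needed. For the labeling, the paper's key idea is that no auxiliary randomness is required: for each $2$-type $\tau$ one defines the canonical, equivariant labels $\eta^\tau_a = \Pr(c < a \mid \cF_a)$, where $\tp(ac) = \tau$ and $\cF_a$ is the $\sigma$-field of $G_a$-fixed events. Corollary~\ref{c:deFinetti} (the de Finetti theorem you already invoke) shows these are i.i.d., and a genuinely combinatorial argument with alternating $\tau$-paths (Lemmas~\ref{Lem:Inters}, \ref{Lem:connections}, \ref{Lem:atomless}) shows that \emph{if} their common distribution $\lambda^\tau$ is non-atomic, then almost surely $a < b \iff \eta^\tau_a < \eta^\tau_b$; this is exactly your $\Xi$, and then $\nu = \muu$ as in your third paragraph (Lemma~\ref{l:non-atomic}). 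For the failure case, your instinct that the obstruction should produce an invariant order is right, but the mechanism is not ``weak elimination of imaginaries codes an invariant datum by real tuples''; weak elimination is already spent inside the de Finetti theorem and the path lemma. Instead, the paper argues measure-theoretically: if $\lambda^\tau$ has an atom $p$, conditioning on $\eta^\tau_{a_0} = \cdots = \eta^\tau_{a_{k-1}} = p$ defines a new invariant measure $\nu_{\mu,\tau,p}$, which is again ergodic by the independence criterion of Proposition~\ref{p:indep-ergodicity}, and which \emph{respects} $\tau$: no point of type $\tau$ over both $a$ and $b$ can lie between them (Lemmas~\ref{Lem:measures}, \ref{l:respecting-types}). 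Iterating over the finitely many $2$-types ($\aleph_0$-categoricity) yields a measure respecting all $2$-types, which is then shown --- again via $\tau$-paths --- to be a Dirac measure, i.e.\ a fixed point (Lemma~\ref{l:Dirac}). Without these two arguments, your text is a correct reduction plus an acknowledged open problem, not a proof. (A last minor point: passing from uniqueness among \emph{ergodic} invariant measures to unique ergodicity requires the density of convex combinations of ergodic measures, which also deserves a word.)
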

Theorem~\ref{th:intro:main} recovers almost all known results about unique ergodicity of $\LO(M)$. More specifically, it applies to the following structures:
\begin{itemize}
\item the random graph, the $K_n$-free homogeneous graphs, various homogeneous hypergraphs, and the universal homogeneous tournament \cite{Angel2014};
  
\item the generic directed graphs obtained by omitting a (possibly infinite) set of tournaments or a fixed, finite, discrete graph \cite{Pawliuk2020}. 
\end{itemize}
The class of structures satisfying the hypothesis of Theorem~\ref{th:intro:main} is quite a bit richer than the examples above. We should mention, however, that it does not cover all cases where unique ergodicity of the space of linear orders is known. One exception is the rational Urysohn space $\bU_0$: it was proved in \cite{Angel2014} that the action $\Iso(\bU_0) \actson \LO(\bU_0)$ is uniquely ergodic but $\bU_0$ is not $\aleph_0$-categorical (as it has infinitely many $2$-types). It also does not apply directly to prove unique ergodicity for proper subflows of $\LO$, for example for the automorphism group of the countable-dimensional vector space over a finite field.

The proof of Theorem~\ref{th:intro:main} is the object of Section~\ref{sec:invar-meas-space}, where it is stated as Theorem~\ref{Thm:Princip}.

We also have an interesting corollary of Theorem~\ref{th:intro:main} concerning amenability.
\begin{cor}
  \label{c:intro:amenability}
  Suppose that $M$ satisfies the assumptions of Theorem~\ref{th:intro:main}. If the action $\Aut(M) \actson \LO(M)$ is not minimal and has no fixed points, then $\Aut(M)$ is not amenable.
\end{cor}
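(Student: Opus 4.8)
The plan is to prove the contrapositive: assuming $G$ is amenable, I will show that either the action $G \actson \LO(M)$ is minimal or it has a fixed point. The key tool is Theorem~\ref{th:intro:main}, which tells us that under the standing hypotheses exactly one of two things happens: the action has a fixed point, or it is uniquely ergodic. So the argument naturally splits according to this dichotomy, and the real content is to rule out the ``uniquely ergodic but not minimal'' case for amenable $G$.

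\begin{proof}
  We prove the contrapositive, so suppose $G$ is amenable. By Theorem~\ref{th:intro:main}, either the action $G \actson \LO(M)$ has a fixed point, or it is uniquely ergodic; we must show that if it has no fixed point, then it is minimal. Assume therefore that the action has no fixed point, so by the dichotomy it is uniquely ergodic. Let $\mu$ denote the unique $G$-invariant probability measure on $\LO(M)$.

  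The strategy is to locate a minimal subflow and show that its unique invariant measure forces it to be all of $\LO(M)$. Since $G$ is amenable and $\LO(M)$ is a $G$-flow, it carries an invariant measure, and by uniqueness this measure must be $\mu$; in particular $\mu$ is the only invariant measure on any closed invariant subset on which it is supported. Now the flow $\LO(M)$, being compact, contains at least one minimal subflow $Y \sub \LO(M)$ (by Zorn's lemma applied to the closed invariant subsets). As $G$ is amenable, the subflow $Y$ carries an invariant measure $\nu$, and by the unique ergodicity of $\LO(M)$ we must have $\nu = \mu$. Consequently $\mu$ is supported on $Y$, i.e.\ $\supp \mu \sub Y$.
\end{proof}

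I expect the main obstacle to be showing that $\supp \mu = \LO(M)$, for this is what lets us conclude $Y = \LO(M)$ and hence that the action is minimal. The point is that $\mu$ must have full support: if its support were a proper closed invariant subset, one would like to derive a contradiction, and this is where the structure of $\LO(M)$ under the model-theoretic hypotheses (transitivity, no algebraicity, weak elimination of imaginaries) should enter, presumably through the explicit form of $\mu$ coming from the de Finetti-type description used to prove unique ergodicity. Once full support is established, $\supp \mu \sub Y \sub \LO(M)$ together with $\supp \mu = \LO(M)$ gives $Y = \LO(M)$, so the unique minimal subflow is everything and the action is minimal, completing the contrapositive.
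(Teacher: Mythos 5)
Your route is the paper's own argument recast as a contrapositive: both proofs take a minimal subflow, invoke amenability to put an invariant measure on it, and play that measure off against the dichotomy of Theorem~\ref{th:intro:main}. But your proof, as written, is incomplete: the proof environment closes at $\supp \mu \sub Y$, and the step you yourself flag as ``the main obstacle'' --- that $\supp \mu = \LO(M)$, hence $Y = \LO(M)$ --- is never established. This is a genuine gap, not a formality. Unique ergodicity alone does not imply minimality, even in the absence of fixed points: the unique invariant measure of a non-minimal flow can be carried by a proper minimal subflow (for instance, the product of the shift on the one-point compactification of $\mathbb{Z}$ with an irrational rotation is uniquely ergodic, has no fixed points, and is not minimal). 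So the entire force of the corollary rests on exactly the point your proof omits.

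The good news is that the missing step is immediate, and your diagnosis of where it should come from is off target: no model-theoretic input and no de Finetti-type analysis of $\mu$ is needed. The body of the paper (Theorem~\ref{Thm:Princip}) states the dichotomy with the unique measure identified: it is the uniform measure $\muu$, with $\muu(a_0 < \cdots < a_{k-1}) = 1/k!$ for all distinct $a_0, \ldots, a_{k-1}$. Even quoting only the introduction's version of the theorem, you can recover this for free: $\muu$ is $\Sym(M)$-invariant, hence $G$-invariant, for \emph{any} structure $M$, so unique ergodicity forces $\mu = \muu$. Now every nonempty open subset of $\LO(M)$ contains a nonempty basic clopen set of the form $\set{x : a_{\sigma(0)} <_x \cdots <_x a_{\sigma(k-1)}}$, and each such set has $\muu$-measure $1/k! > 0$; therefore $\supp \muu = \LO(M)$. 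Hence $Y \supseteq \supp \mu = \LO(M)$, the flow $\LO(M)$ coincides with the minimal subflow $Y$, and the action is minimal, completing your contrapositive. Inserting these few sentences inside your proof closes the gap and makes your argument coincide with the paper's.
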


Corollary~\ref{c:intro:amenability} applies for example to the automorphism groups of the universal homogeneous partial order and the circular directed graphs $\bS(n)$ for $n \geq 2$, recovering results of Kechris--Soki\'c~\cite{Kechris2012} and Zucker~\cite{Zucker2014}, respectively.

Corollary~\ref{c:intro:amenability} also has an interesting purely combinatorial consequence of which we do not know a combinatorial proof. Recall that a Fraïssé class $\cF$ (or its Fraïssé limit) has the \df{Hrushovski property} if partial automorphisms of elements of $\cF$ extend to full automorphisms of superstructures in $\cF$. It has the \df{ordering property} if for every $A \in \cF$, there exists $B \in \cF$ such that for any two linear orders $<$ and $<'$ on $A$ and $B$ respectively, there is an embedding of $(A, <)$ into $(B, <')$. The Hrushovski and the ordering properties are important in the theory of homogeneous structures and in structural Ramsey theory but are not a priori related. We refer the reader to \cite{Kechris2007a} and \cite{Nesetril1978} for more details about them.
\begin{cor}
  \label{c:intro:Hrushovski}
  Suppose that the homogeneous structure $M$ satisfies the assumptions of Theorem~\ref{th:intro:main}. If $M$ has the Hrushovski property, then it has the ordering property.
\end{cor}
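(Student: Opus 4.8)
The plan is to reduce the statement to the minimality of the flow $G \actson \LO(M)$ (where $G = \Aut(M)$) and then feed this into the Kechris--Pestov--Todorcevic correspondence. Write $\cF$ for the age of $M$, so that $M$ is the Fraïssé limit of $\cF$. The free linear order expansion $\cF^{<}$ of $\cF$ (adjoin an arbitrary linear order) is again a Fraïssé class, its associated space of expansions is literally $\LO(M)$, and its expansion property relative to $\cF$ is precisely the ordering property as stated. By the correspondence of \cite{Kechris2007a} (in its general expansion form), minimality of $G \actson \LO(M)$ is equivalent to this expansion property. Hence it suffices to show that, under the Hrushovski property, the action $G \actson \LO(M)$ is \emph{minimal}.

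The first input is that the Hrushovski property makes $G$ amenable. Iterating the extension property for partial automorphisms along a chain of finite substructures exhausting $M$ produces a dense locally finite subgroup $H \leq G$. A locally finite group is amenable as a discrete group, so every $G$-flow carries an $H$-invariant probability measure $\mu$; since $g \mapsto g_*\mu$ is weak$^*$-continuous, the stabilizer $\{g \in G : g_*\mu = \mu\}$ is a closed subgroup containing the dense subgroup $H$, hence equals $G$, and $\mu$ is $G$-invariant. Thus $G$ is amenable.

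Next I rule out the fixed-point alternative of Theorem~\ref{th:intro:main}. A fixed point of $G \actson \LO(M)$ is a $G$-invariant, hence (by $\aleph_0$-categoricity) $\emptyset$-definable, linear order $<$ on $M$. But every element of the dense locally finite subgroup $H$ has finite order, and a finite-order order-preserving bijection of a linearly ordered set is the identity: if $h(x) > x$ then $\left(h^n(x)\right)_{n}$ is strictly increasing, so $h$ has infinite order. Therefore $H = \{1\}$, and by density $G = \{1\}$, contradicting the transitivity of $G$ on the infinite set $M$. So $G \actson \LO(M)$ has no fixed point. Applying the contrapositive of Corollary~\ref{c:intro:amenability} to the amenable group $G$, the action is minimal or has a fixed point; having excluded the latter, it is minimal, and the correspondence recalled above yields the ordering property.

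The main obstacle is matching the hypotheses of the two external black boxes to our setting. The delicate point is step two: deriving amenability from the Hrushovski property cleanly via a dense locally finite subgroup, which is where one must be careful if only the non-coherent form of the extension property is available. The rest is routine once one checks that the free order expansion of $\cF$ is a Fraïssé class whose expansion flow is exactly $\LO(M)$, so that the KPT correspondence identifies minimality with the ordering property verbatim.
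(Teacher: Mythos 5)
Your overall strategy is the paper's: amenability from the Hrushovski property, exclusion of fixed points in $\LO(M)$, the contrapositive of Corollary~\ref{c:intro:amenability} to get minimality, and then minimality implies the ordering property. However, step two as written has a genuine gap, and it is exactly the one you flag without resolving: the claim that iterating the extension property produces a dense \emph{locally finite} subgroup $H \leq G$ is not known to follow from the plain Hrushovski property. Iterating EPPA along an exhaustion $B_0 \sub B_1 \sub \cdots$ of $M$ extends any partial automorphism of $B_n$ to an automorphism of $M$ preserving each $B_m$, $m > n$, setwise; what this yields is an increasing chain of \emph{compact} subgroups $K_n = \set{g \in G : g \cdot B_m = B_m \text{ for all } m \geq n}$ whose union is dense (this is \cite{Kechris2007a}*{Proposition~6.4}, which is what the paper invokes). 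Cutting down to a locally finite dense subgroup requires a coherence condition on the extensions (coherent EPPA, in the sense of Siniora and Solecki), which is not part of the hypothesis here. Nor can you simply substitute $\bigcup_n K_n$ for $H$ in your measure argument: compact subgroups of $\Sym(M)$ are profinite and may contain infinite finitely generated, even free, subgroups, so $\bigcup_n K_n$ need not be amenable \emph{as a discrete group}, which is what your argument uses; likewise its elements need not have finite order, so your fixed-point exclusion also breaks.

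Both uses of $H$ survive the repair, and the repaired proof is the paper's. For amenability: each $K_n$ is compact, hence amenable as a topological group, so every $G$-flow carries a $K_n$-invariant measure; a weak$^*$ cluster point of such measures is invariant under every $K_m$, hence under the dense subgroup $\bigcup_n K_n$, and your closed-stabilizer observation upgrades this to $G$-invariance. For excluding fixed points: a compact subgroup $K$ has finite orbits on $M$, and a finite orbit of a group preserving a linear order must be a singleton; so a $G$-invariant linear order would force every $K_n$ to be trivial, whence $G = \set{1}$ by density, contradicting transitivity on the infinite set $M$ --- this replaces your finite-order-element argument (which is correct in itself, just unavailable without $H$). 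Your remaining steps are sound and match the paper: the contrapositive of Corollary~\ref{c:intro:amenability} gives minimality of $G \actson \LO(M)$, and minimality yields the ordering property by the expansion-property correspondence, which the paper cites as \cite{NguyenVanThe2013}*{Theorem~4}; your verification that the free order expansion of the age is a Fraïssé class with expansion flow $\LO(M)$ does implicitly use strong amalgamation, but that follows from the no-algebraicity hypothesis.
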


The paper is organized as follows. In Section~\ref{sec:prel-from-model}, we recall some prerequisites from model theory, mostly about imaginaries and $M\eq$. While using standard model-theoretic terminology, we give all definitions and proofs in the language of permutation groups in the hope of making the paper more accessible to non-logicians. In Section~\ref{sec:general-de-finett}, we recall some facts from representation theory and prove Theorem~\ref{th:intro:deFinetti}. Section~\ref{sec:invar-meas-space} is devoted to the proof of Theorem~\ref{th:intro:main} and its corollaries. Finally, in Section~\ref{sec:examples}, we briefly discuss some examples and possible extensions of Theorem~\ref{th:intro:main}.

\subsection*{Acknowledgments} We would like to thank Itaï Ben Yaacov and David Evans for helping us eliminate imaginaries in some examples and Lionel Nguyen Van Thé and Andy Zucker for useful discussions. We are also grateful to the anonymous referees for a detailed reading of the paper and many helpful suggestions.

Research was partially supported by the ANR project AGRUME (ANR-17-CE40-0026) and the \emph{Investissements d'Avenir} program of Université de Lyon (ANR-16-IDEX-0005).


\section{Preliminaries from model theory}
\label{sec:prel-from-model}

We start by recalling some basic definitions. A \df{signature} $\cL$ is a collection of relation symbols $\set{R_i}$ and function symbols $\set{F_j}$, each equipped with a natural number called its \df{arity}. An \df{$\cL$-structure} is a set $M$ together with interpretations for the symbols in $\cL$: each relation symbol $R_i$ of arity $n_i$ is interpreted as an $n_i$-ary relation on $M$, that is, a subset of $M^{n_i}$, and each function symbol $F_j$ of arity $n_j$ is interpreted as a function $M^{n_j} \to M$. Functions of arity $0$ are called \df{constants}. A \df{substructure} of $M$ is a subset of $M$ closed under the functions, equipped with the induced structure. The \df{age of $M$} is the collection of isomorphism classes of all finitely generated substructures of $M$. If $\bar a$ is a tuple from $M$, we denote by $\gen{\bar a}$ the substructure of $M$ generated by $\bar a$. If the signature contains only relation symbols (which will usually be the case for us), then a substructure of $M$ is just a subset with the induced relations.

The \df{automorphism group} of $M$, $\Aut(M)$, is the group of all permutations of $M$ that preserve all relations and functions. $\Aut(M)$ is naturally a topological group if equipped with the pointwise convergence topology (where $M$ is taken to be discrete). If $M$ is countable, then $\Aut(M)$ is a Polish group. If $G = \Aut(M)$ and $A \sub M$ is a finite subset, we will denote by $G_{A}$ the pointwise stabilizer of $A$ in $G$. A basis at the identity of $G$ is given by the subgroups $\set{G_A : A \sub M \text{ is finite}}$. A topological group which admits a basis at the identity consisting of open subgroups is called \df{non-archimedean}. In particular, all groups of the form $\Aut(M)$ are non-archimedean.

The \df{type} of a tuple $\bar a \in M^k$, denoted by $\tp \bar a$, is the isomorphism type of the substructure $\gen{a_i : i < k}$ (with the $a_i$ named). Thus two tuples $\bar a$ and $\bar b$ have the same type (notation: $\bar a \equiv \bar b$) if the map $a_i \mapsto b_i$ extends to an isomorphism $\gen{\bar a} \to \gen{\bar b}$. A $k$-type is simply the type of some tuple $\bar a \in M^k$. The structure $M$ is called \df{homogeneous} if for every two tuples $\bar a$ and $\bar b$ with $\bar a \equiv \bar b$, there exists $g \in \Aut(M)$ such that $g \cdot \bar a = \bar b$. We will say that $M$ is \df{transitive} if there is only one $1$-type, i.e., $G$ acts transitively on $M$.

What we call \df{type} is usually called \df{quantifier-free type} in the model-theoretic literature. However, for homogeneous structures, which is our main interest here, the two notions coincide.

An \df{age} is a countable family of (isomorphism types of) finitely generated $\cL$-structures that is \df{hereditary} (i.e., closed under substructures) and \df{directed} (i.e., for any two structures in the class, there is another structure in the class in which they both embed). If $M$ is a given countable structure, its age is the collection of finitely generated structures that embed into it. If $M$ is homogeneous, then its age has another special property called \df{amalgamation}. An age with amalgamation is called a \df{Fraïssé class}. Fraïssé's theorem states that conversely, any Fraïssé class is the age of a unique countable, homogeneous structure, called its \df{Fraïssé limit}. Thus in order to define a homogeneous structure, one needs only to specify its age; and, as already mentioned, combinatorial properties of the age are reflected in the dynamics of the automorphism group of the limit.

The structures that will be especially important for us are the $\aleph_0$-categorical ones. A structure is \df{$\aleph_0$-categorical} if its first-order theory has a unique countable model up to isomorphism. Another characterization that will be crucial is given by the Ryll-Nardzewski theorem: $M$ is $\aleph_0$-categorical iff the diagonal action $\Aut(M) \actson M^k$ has finitely many orbits for every $k$ (a permutation group with this property is called \df{oligomorphic}). In particular, if $\cL$ is a signature that contains only finitely many relational symbols of each arity and no functions, then every homogeneous $\cL$-structure is $\aleph_0$-categorical. Conversely, if $M$ is any $\aleph_0$-categorical structure, one can render it homogeneous by expanding the signature to include all first-order formulas by a process known as Morleyization (this is another facet of the Ryll-Nardzewski theorem). As we never make assumptions about the signature, in what follows, we will tacitly assume that every $\aleph_0$-categorical structure is rendered homogeneous by this procedure. If $G$ is any closed subgroup of the full permutation group $\Sym(N)$ of some countable set $N$, one can convert $N$ into a homogeneous structure with $\Aut(N) = G$ by naming, for every $k$, each $G$-orbit on $N^k$ by a $k$-ary relation symbol. If the action $G \actson N$ is oligomorphic, then the resulting structure will be $\aleph_0$-categorical.

For the rest of the paper, we will only consider $\aleph_0$-categorical structures. In this setting, all model-theoretic information about $M$ is captured by the actions $\Aut(M) \actson M^k$. We refer the reader to Hodges~\cite{Hodges1993} for more details on Fraïssé theory, $\aleph_0$-categorical structures, and their automorphism groups.

Let $M$ be $\aleph_0$-categorical, $G = \Aut(M)$, and let $A \sub M$ be finite. The \df{algebraic closure of $A$} (denoted $\acl(A)$) is the union of all finite orbits of $G_A$ on $M$. We will say that $M$ \df{has no algebraicity} if the algebraic closure is trivial, that is, $\acl(A) = A$ for all finite $A \sub M$. By Neumann's lemma \cite{Hodges1993}*{Lemma~4.2.1}, having no algebraicity is equivalent to the following: for all finite $A, B, C \sub M$ with $A \cap B \cap C = \emptyset$, there exists $g \in G_C$ such that $g \cdot A \cap B = \emptyset$.

An \df{imaginary element} of $M$ is the equivalence class of a tuple $\bar a \in M^k$ for some $G$-invariant equivalence relation on $M^k$. We denote by $M\eq$ the collection of all imaginaries. In symbols,
\begin{equation*}
  M\eq = \bigsqcup \set{M^k / E : k \in \N \And E \text{ is a $G$-invariant equivalence relation on } M^k}.
\end{equation*}
It is clear that $G$ also acts on $M\eq$ and, moreover, the action $G \actson M\eq$ is \df{locally oligomorphic}, i.e., it is oligomorphic on any union of finitely many $G$-orbits (see, e.g., \cite{Tsankov2012}*{Theorem~2.4}). Open subgroups of $G$ are precisely the stabilizers of imaginary elements of $M$. On the one hand, if $e = [\bar a]_E \in M\eq$ for some $\bar a \in M^k$, then $G_{\bar a} \leq G_e$, which implies that $G_e$ is open. On the other, if $V \leq G$ is an open subgroup, there exists a tuple $\bar a \in M^k$ such that $G_{\bar a} \leq V$. Then a $G$-invariant equivalence relation $E$ on $G \cdot \bar a$ can be defined by
\begin{equation}
  \label{eq:def-imaginary-eqrel}
  (g_1 \cdot \bar a) \eqrel{E} (g_2 \cdot \bar a) \iff g_1V = g_2V \quad \text{ for } g_1, g_2 \in G
\end{equation}
and it can be extended by equality to the rest of $M^k$. If we set $e = [\bar a]_E$, we have that $G_e = V$. This gives another possible way to view the set of imaginary elements of $M$ as
\begin{equation*}
  \bigsqcup \set{G/V : V \text{ is an open subgroup of } G}.
\end{equation*}
Note, however, that there is no canonical bijection between this set and $M\eq$ according to our definition even though they are interdefinable.

We can define for a finite $A \sub M\eq$,
\begin{equation*}
  \acl\eq A = \set{e \in M\eq : G_A \cdot e \text{ is finite}}.
\end{equation*}
Similarly, we can define the \df{definable closure} as
\begin{equation*}
  \dcl\eq A = \set{e \in M\eq : G_A \cdot e = \set{e}}.
\end{equation*}
For arbitrary $A \sub M\eq$, we define $\acl\eq A$ to be the union of $\acl\eq A'$ over all finite $A' \sub A$. Similarly for $\dcl\eq$. A subset $A \sub M\eq$ is \df{algebraically closed} if $\acl\eq A = A$. In other words, $A$ is algebraically closed if for all finite $A' \sub A$, $G_{A'}$ has only infinite orbits outside of $A$.
We have the following basic properties of the algebraic closure.
\begin{lemma}
  \label{l:acl}
  The following hold for an $\aleph_0$-categorical $M$:
  \begin{enumerate}
  \item \label{i:acl:1} For all $A \sub M\eq$, $\acl\eq A$ is algebraically closed;
  \item \label{i:acl:2} If $A, B \sub M\eq$ are algebraically closed, then so is $A \cap B$.
  \end{enumerate}
\end{lemma}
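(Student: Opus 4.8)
The plan is to reduce both parts to two elementary facts about the action $G \actson M\eq$: the monotonicity of $\acl\eq$ and its transitivity (idempotence). For monotonicity, if $D \sub A'$ are finite subsets of $M\eq$, then $G_{A'} \le G_D$, so any $e$ with $G_D \cdot e$ finite also has $G_{A'} \cdot e$ finite; hence $\acl\eq D \sub \acl\eq A'$, and passing to unions this extends to arbitrary $D \sub A$. Granting monotonicity, part~\ref{i:acl:2} is immediate: if $e \in \acl\eq(A \cap B)$, then $e \in \acl\eq D$ for some finite $D \sub A \cap B$; since $D \sub A$ and $A$ is algebraically closed, $e \in \acl\eq D \sub \acl\eq A = A$, and symmetrically $e \in B$, so $e \in A \cap B$. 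Thus the real content of the lemma is part~\ref{i:acl:1}, which amounts to showing $\acl\eq(\acl\eq A) \sub \acl\eq A$.

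For part~\ref{i:acl:1}, I would first record the group-theoretic key step: if $H \le G$ has finite index and $H \cdot e$ is finite, then $G \cdot e$ is finite (write $G$ as a finite union of cosets $g_i H$, so that $G \cdot e = \bigcup_i g_i (H \cdot e)$ is a finite union of finite sets). Now take $e \in \acl\eq(\acl\eq A)$. By definition there is a finite $C \sub \acl\eq A$ with $G_C \cdot e$ finite, and each element of $C$ lies in $\acl\eq A'$ for some finite $A' \sub A$; enlarging $A'$ to the union of these finitely many witnesses, I may assume $C \sub \acl\eq A'$, so every element of $C$ has finite $G_{A'}$-orbit. The orbit of the tuple $C$ under $G_{A'}$ is then contained in the product of these finite orbits, hence finite, and by the orbit--stabilizer relation the pointwise stabilizer $G_{A' \cup C} = G_{A'} \cap G_C$ has finite index in $G_{A'}$.

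Finally, since $G_{A' \cup C} \le G_C$ and $G_C \cdot e$ is finite, the orbit $G_{A' \cup C} \cdot e$ is finite; applying the key step to $H = G_{A' \cup C}$ inside $G_{A'}$ shows $G_{A'} \cdot e$ is finite, i.e.\ $e \in \acl\eq A' \sub \acl\eq A$. This gives $\acl\eq(\acl\eq A) \sub \acl\eq A$, and the reverse inclusion is trivial. I expect the orbit--stabilizer bookkeeping in part~\ref{i:acl:1} to be the only genuinely delicate point; here it is essential that $C$ is \emph{finite}, so that one can absorb finitely many witnessing sets $A'$ into a single finite $A'$ and that the tuple orbit is a finite product of finite orbits. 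Everything else is formal manipulation of the definitions.
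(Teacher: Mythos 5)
Your argument is correct, and it relates to the paper's proof as follows. For part~(ii) the two proofs are essentially identical: both take the finite witness set $D \sub A \cap B$ for $e \in \acl\eq (A \cap B)$, observe that $D$ is also a finite subset of $A$ (and of $B$), and invoke algebraic closedness of $A$ and $B$; you argue directly ($e \in \acl\eq D \sub \acl\eq A = A$), while the paper phrases the same point as a contradiction ($e \notin A$ would force the orbit $G_D \cdot e$ to be infinite). The genuine difference is in part~(i): the paper does not prove it at all but cites a permutation-group-theoretic proof from the literature (Evans--Tsankov, Lemma~2.4 of the cited paper), whereas you supply a complete self-contained argument --- absorb the finitely many witnessing sets into a single finite $A' \sub A$, bound the $G_{A'}$-orbit of the tuple enumerating $C$ by a finite product of finite orbits, conclude via orbit--stabilizer that $[G_{A'} : G_{A' \cup C}] < \infty$, and then transfer finiteness of the $G_{A' \cup C}$-orbit of $e$ to the $G_{A'}$-orbit by a coset decomposition. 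This is exactly the standard argument the citation stands in for, and your bookkeeping (finiteness of $C$, the tuple-orbit bound, the finite-index step) is where the real content lies. Note also that, like the cited reference, your proof is purely group-theoretic and nowhere uses $\aleph_0$-categoricity, so it establishes the lemma for an arbitrary permutation group acting on $M\eq$.
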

\begin{proof}
  \ref{i:acl:1} A permutation group theoretic proof of this fact can be found for example in \cite{Evans2016}*{Lemma~2.4}.

  \ref{i:acl:2} Suppose that $C \sub A \cap B$ is finite and $e \in M\eq$ is such that $G_C \cdot e$ is finite. Then, as $C \sub A$ and $A$ is algebraically closed, we have that $e \in A$, and similarly, $e \in B$.
\end{proof}

$M$ admits \df{elimination of imaginaries} if all imaginary elements are \df{interdefinable} with real tuples, that is, for every $e \in M\eq$, there exists $k \in \N$ and a tuple $\bar a \in M^k$ such that $e \in \dcl\eq \bar a$ and $\bar a \in \dcl\eq e$, or equivalently, $G_e = G_{\bar a}$. $M$ admits \df{weak elimination of imaginaries} if for every imaginary element $e \in M\eq$, there exists a real tuple $\bar a \in M^k$ such that $e \in \dcl\eq \bar a$ and $\bar a \in \acl\eq e$. Equivalently, for every open subgroup $V \leq G$, there exists $k$ and a tuple $\bar a \in M^k$ such that $G_{\bar a} \leq V$ and $[V : G_{\bar a}] < \infty$. In these definitions, it is important to allow $k = 0$ and $\bar a = \emptyset$.

The two hypotheses of no algebraicity and weak elimination of imaginaries combined give us a complete understanding of the $\acl\eq$ operator.
\begin{lemma}
  \label{l:no-algeb-and-welim}
  Suppose that $M$ is $\aleph_0$-categorical and that it has no algebraicity and admits weak elimination of imaginaries. Then for all $A, B \sub M$, we have that
  \begin{equation*}
   \acl\eq A \cap \acl\eq B = \dcl\eq (A \cap B). 
  \end{equation*}
\end{lemma}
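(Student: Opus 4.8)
The plan is to prove the two inclusions separately; essentially all of the content sits in one of them. The inclusion $\dcl\eq(A \cap B) \sub \acl\eq A \cap \acl\eq B$ is immediate from the monotonicity of the two closure operators together with the general fact $\dcl\eq \sub \acl\eq$: since $A \cap B \sub A$ we have $\dcl\eq(A \cap B) \sub \dcl\eq A \sub \acl\eq A$, and symmetrically $\dcl\eq(A \cap B) \sub \acl\eq B$.

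For the reverse inclusion $\acl\eq A \cap \acl\eq B \sub \dcl\eq(A \cap B)$, I would fix $e \in \acl\eq A \cap \acl\eq B$ and invoke weak elimination of imaginaries to produce a real tuple $\bar a \in M^k$ with $e \in \dcl\eq \bar a$ and $\bar a \in \acl\eq e$. Writing $D \sub M$ for the (finite) set of coordinates of $\bar a$, these two conditions read $e \in \dcl\eq D$ and $D \sub \acl\eq e$. It then suffices to show $D \sub A \cap B$, for this gives $e \in \dcl\eq D \sub \dcl\eq(A \cap B)$, as required.

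To see that $D \sub A$, I would argue as follows. Since $e \in \acl\eq A$ and, by Lemma~\ref{l:acl}\ref{i:acl:1}, the set $\acl\eq A$ is algebraically closed, monotonicity and idempotence of the algebraic closure give $\acl\eq e \sub \acl\eq(\acl\eq A) = \acl\eq A$, so $D \sub \acl\eq A$. As the entries of $\bar a$ are real, $D \sub \acl\eq A \cap M$, and here the assumption of no algebraicity enters through the identity $\acl\eq A \cap M = \acl A = A$ (an element of $M$ lies in $\acl\eq A$ exactly when it has a finite $G_{A'}$-orbit for some finite $A' \sub A$, i.e.\ exactly when it lies in some $\acl A' = A'$). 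Hence $D \sub A$, and the same argument with $B$ in place of $A$ gives $D \sub B$, whence $D \sub A \cap B$.

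The argument needs no separate reduction to finite $A, B$, since weak elimination already delivers a finite $D$ and the orbit characterisations of $\acl\eq$ and $\dcl\eq$ on infinite parameter sets reduce to finite subsets automatically. The one point demanding care is the coordination of the two hypotheses: no algebraicity is precisely what forces the real tuple $\bar a$ extracted from $e$ to be \emph{contained in} $A$ (and in $B$), rather than merely algebraic over them, and it is this containment, not mere algebraicity, that makes the two sides meet inside $A \cap B$. I expect the only steps needing explicit verification to be the identity $\acl\eq A \cap M = A$ and the passage $e \in \acl\eq A \Rightarrow \acl\eq e \sub \acl\eq A$.
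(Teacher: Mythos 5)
Your proof is correct, and it takes a genuinely cleaner route than the paper's. The paper fixes a representative $\bar c$ of the imaginary $e = [\bar c]_E$, reduces to finite $A, B$, and uses the \emph{subgroup} formulation of weak elimination of imaginaries: applying it to the open subgroup $H = G_{A \cup \set{e}}$, it obtains a tuple $\bar a$ with $G_{\bar a} \leq H$ of finite index in $G_A$, concludes from no algebraicity that $\bar a$ lies in $A$ (so $G_A$ fixes $e$), and then needs a second, separate argument --- invoking weak elimination again and splitting off the case $e \in \dcl\eq \emptyset$ --- to show that the representative $\bar c$ itself lies in $A$, and symmetrically in $B$. You instead apply the element formulation of weak elimination once, directly to $e$, and the containment $D \sub \acl\eq e \sub \acl\eq(\acl\eq A) = \acl\eq A$ (monotonicity plus Lemma~\ref{l:acl}\ref{i:acl:1}), combined with the identity $\acl\eq A \cap M = A$ supplied by no algebraicity, places the same tuple inside both $A$ and $B$ at once. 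This saves the case distinction, the second use of weak elimination, the detour through the representative $\bar c$, and the preliminary reduction to finite $A$ and $B$; what it requires in exchange are exactly the two facts you single out, and both hold for the reasons you give: $\acl\eq$ restricted to real elements agrees with $\acl$ by the orbit definitions (so no algebraicity collapses it to the identity), and idempotence of $\acl\eq$ is Lemma~\ref{l:acl}\ref{i:acl:1}. Both proofs ultimately rest on the same mechanism --- no algebraicity forces a real tuple algebraic over $A$ to be \emph{contained in} $A$ --- but your coordination of the two hypotheses is more economical than the paper's.
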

\begin{proof}
  The $\supseteq$ inclusion being clear, we only check the other. We may assume that $A$ and $B$ are finite. Suppose that $e \in \acl\eq A$. By weak elimination of imaginaries, there exists a tuple $\bar c$ such that $e \in \dcl\eq \bar c$ and $\bar c \in \acl\eq e$. We will show that the tuple $\bar c$ is contained in $A$. Consider the group $H = G_{A \cup \set{e}}$. As $H$ contains the open subgroup $G_{A \bar c}$, it is also open. By weak elimination of imaginaries, there exists a tuple $\bar a$ such that $G_{\bar a} \leq H$ and $[H : G_{\bar a}] < \infty$. As $e \in \acl\eq A$, $[G_A : H] < \infty$ and thus $[G_A : G_{\bar a}] < \infty$. By the no algebraicity assumption, $\bar a$ must be contained in $A$, so, in particular, $H = G_A$, i.e., $G_A$ fixes $e$. If $\bar c$ is not contained in $A$, then the orbit $G_A \cdot \bar c$ is infinite, which implies that the orbit $G_e \cdot \bar c$ is infinite, contradicting the fact that $\bar c \in \acl\eq e$. Thus we conclude that $\bar c$ is contained in $A$. An analogous argument shows that $\bar c$ is also contained in $B$ and hence, $e \in \dcl\eq (A \cap B)$.
\end{proof}


\section{Unitary representations and a generalization of de Finetti's theorem}
\label{sec:general-de-finett}

Recall that a \df{unitary representation} of a topological group $G$ is a continuous action on a complex Hilbert space $\cH$ by unitary operators, or, equivalently, a continuous homomorphism from $G$ to the unitary group of $\cH$. A representation $G \actson \cH$ is \df{irreducible} if $\cH$ contains no non-trivial, $G$-invariant, closed subspaces.

In the case where $M$ is an $\aleph_0$-categorical structure and $G = \Aut(M)$, the action $G \actson M\eq$ gives rise to a representation $G \actson^\lambda \ell^2(M\eq)$ given by
\begin{equation*}
  (\lambda(g) \cdot f)(e) = f(g^{-1} e), \quad \text{ where } f \in \ell^2(M\eq), g \in G, e \in M\eq.
\end{equation*}
It turns out that this representation captures all of the representation theory of $G$. More precisely, it follows from the results of \cite{Tsankov2012} that the following holds.
\begin{fact}
  \label{f:representations}
  Let $M$ be an $\aleph_0$-categorical structure and let $G = \Aut(M)$. Then every unitary representation of $G$ is a sum of irreducible representations and every irreducible representation is isomorphic to a subrepresentation of $\lambda$. In particular, every representation of $G$ is a subrepresentation of a direct sum of copies of $\lambda$.
\end{fact}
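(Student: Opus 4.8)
The plan is to derive the statement from the representation theory of Roelcke precompact non-archimedean groups developed in \cite{Tsankov2012}, whose driving force is the finiteness of double coset spaces. The first step is to reinterpret $\lambda$ concretely. The stabilizer in $G$ of an imaginary $e = [\bar a]_E$ is the open subgroup $\set{g \in G : (g\bar a, \bar a) \in E} \supseteq G_{\bar a}$, and conversely every open subgroup is realized this way: if $V \supseteq G_{\bar a}$, then $g\bar a \mathrel{E} h\bar a \iff g^{-1}h \in V$ is a well-defined, $G$-invariant equivalence relation on the orbit $G \cdot \bar a \sub M^{|\bar a|}$ whose class $[\bar a]_E$ has stabilizer exactly $V$. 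Hence the $G$-orbits on $M\eq$ realize all homogeneous spaces $G/V$ with $V$ open, and $\lambda \cong \bigoplus_O \ell^2(G/V_O)$, so that each $\ell^2(G/V)$ occurs as a subrepresentation of $\lambda$. It therefore suffices to prove that every irreducible embeds into some $\ell^2(G/V)$ and that every representation decomposes discretely.

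I would then isolate the two substantive inputs. First, for every open $V$ the double coset space $V \backslash G / V$ is finite; this is precisely oligomorphicity of $G$. Consequently the commutant of $G$ in $\ell^2(G/V)$, which is the Hecke algebra of $V$-bi-invariant convolution operators of dimension $\abs{V \backslash G / V}$, is finite-dimensional, and a unitary representation with finite-dimensional commutant is a finite direct sum of irreducibles; thus each $\ell^2(G/V)$ is completely reducible with finitely many irreducible constituents. Second, for any continuous representation $\pi$ on $\cH$, the union $\bigcup_V \cH^V$ of the subspaces of $V$-fixed vectors, over open $V$, is dense in $\cH$.

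Granting these two inputs, the remaining assembly is soft. For an irreducible $\sigma$ on $\cK$, density provides a nonzero $V$-fixed vector for some open $V$; identifying $\ell^2(G/V)$ with $\mathrm{Ind}_V^G \mathbf{1}$, Frobenius reciprocity turns this vector into a nonzero $G$-intertwiner $\sigma \to \ell^2(G/V)$, which by Schur's lemma and irreducibility is an embedding, so $\sigma \hookrightarrow \ell^2(G/V) \sub \lambda$. For complete reducibility of an arbitrary $\pi$ on $\cH$, I would take (by Zorn) a maximal orthogonal family of irreducible subrepresentations with closed span $\cK$; if $\cK \ne \cH$, then $\cK^\perp$ is a nonzero subrepresentation, hence by density contains a nonzero $V$-fixed vector $\eta$, and the cyclic subrepresentation $\overline{\pi(G)\eta}$ is a quotient of $\ell^2(G/V)$, hence completely reducible, hence contains an irreducible summand, contradicting maximality. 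Thus $\cK = \cH$ and $\pi$ is a sum of irreducibles.

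The main obstacle is establishing the density of invariant vectors: since the open subgroups $V$ are not compact, one cannot simply average over Haar measure to produce a $V$-invariant vector near a given $\xi$. Continuity supplies, for each $\xi$ and $\epsilon$, an open $V$ with $\norm{\pi(g)\xi - \xi} < \epsilon$ for $g \in V$, but manufacturing a genuinely $V$-invariant vector close to $\xi$ requires exploiting the finite double coset structure from the first input rather than compactness; this, together with the finite-dimensionality of the Hecke algebra, is exactly the technical heart of \cite{Tsankov2012}, and the scheme above is how I would package those results into the stated Fact.
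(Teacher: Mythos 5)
Your reinterpretation of $\lambda$ is correct and is in fact identical to the paper's device: for open $V \supseteq G_{\bar a}$, the relation $g \bar a \eqrel{E} h \bar a \iff g^{-1}h \in V$ realizes $G/V$ as a $G$-orbit in $M\eq$, so every $\ell^2(G/V)$ embeds in $\lambda$. Your first input is also sound: any intertwiner $T$ of $\ell^2(G/V)$ is determined by $T\delta_{eV}$, which is a $V$-fixed vector, and $(\ell^2(G/V))^V$ has dimension at most $|V \backslash G/V| < \infty$, so the commutant is finite-dimensional and $\ell^2(G/V)$ is a finite sum of irreducibles. The genuine gap is in your ``soft assembly,'' and it occurs twice as the same false principle: unitary Frobenius reciprocity fails in the direction you use it when $V$ has infinite index. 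A nonzero $V$-fixed vector in an irreducible $\sigma$ does \emph{not} yield a nonzero intertwiner $\sigma \to \ell^2(G/V)$; the trivial representation is already a counterexample, since every one of its vectors is $V$-fixed while $\operatorname{Hom}_G(\mathbf{1}, \ell^2(G/V)) = (\ell^2(G/V))^G = 0$ whenever $G/V$ is infinite (a $G$-invariant function on $G/V$ is constant, hence not in $\ell^2$). The same example refutes the claim that the cyclic representation generated by a $V$-fixed vector is a quotient of $\ell^2(G/V)$ (for unitary representations, quotients and subrepresentations coincide). The root of the problem is that the map $\delta_{gV} \mapsto \pi(g)\eta$ need not be bounded.

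You also misplace the technical difficulty. The density of $\bigcup_V \cH^V$ is the \emph{easy} part and needs no double-coset finiteness: given $\xi$ and an open $V$ with $\lVert \pi(v)\xi - \xi \rVert < \epsilon$ for $v \in V$, the unique element of minimal norm in the closed convex hull of $\pi(V)\xi$ is $V$-fixed and $\epsilon$-close to $\xi$; this works in any non-archimedean group. Your complete-reducibility half can even be repaired softly: if $\eta$ is a cyclic $V$-fixed vector and $P_V$ is the projection onto $\cH^V$, then $P_V \pi(v) = P_V$ for $v \in V$, so $P_V \pi(g) \eta$ depends only on $VgV$, giving $\dim \cH^V \leq |V \backslash G / V|$; since the commutant embeds into $\cH^V$, the cyclic representation is a finite sum of irreducibles. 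But the other half --- finding, for a given irreducible, \emph{some} open $W$ with $\sigma \hookrightarrow \ell^2(G/W)$ --- cannot be extracted from your two inputs: one must locate the correct subgroup (for the trivial representation, $W = G$), and that is precisely the classification in \cite{Tsankov2012}*{Theorem~4.2}, namely that every irreducible is $\Ind_H^G(\sigma_0)$ with $\sigma_0$ factoring through a finite quotient $K = H/V$. The paper cites this and then concludes by induction in stages, $\ell^2(G/V) \cong \Ind_H^G(\lambda_K) \supseteq \Ind_H^G(\sigma_0)$, landing inside $\ell^2(M\eq)$. So your scheme, as written, is not a repackaging of Tsankov's results: it replaces the one step that carries the real content with an argument that is false.
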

\begin{proof}
  The first claim is part of the statement of \cite{Tsankov2012}*{Theorem~4.2}. For the second, it follows from \cite{Tsankov2012}*{Theorem~4.2} that every irreducible representation of $G$ is an induced representation of the form $\Ind_H^G(\sigma)$, where $H$ is an open subgroup of $G$ and $\sigma$ is an irreducible representation of $H$ that factors through a finite quotient $K = H/V$ of $H$. (We refer the reader to \cite{Tsankov2012} for the definition of induced representation and more details.)  As $V \leq G$ is open, there exists a tuple $\bar a$ from $M$ such that $G_{\bar a} \leq V$. As in \eqref{eq:def-imaginary-eqrel}, define the $G$-invariant equivalence relation $E$ on $G \cdot \bar a$ by
  \begin{equation*}
    (g_1 \cdot \bar a) \eqrel{E} (g_2 \cdot \bar a) \iff g_1V = g_2V \quad \text{ for } g_1, g_2 \in G
  \end{equation*}
  and note that $V = G_{[\bar a]_E}$. In particular, the quasi-regular representation $\ell^2(G/V)$ is isomorphic to the subrepresentation $\ell^2(G \cdot [\bar a]_E)$ of $\ell^2(M\eq)$. On the other hand,
  \begin{equation*}
    \ell^2(G/V) \cong \Ind_V^G(1_V) \cong \Ind_H^G(\Ind_V^H(1_V)) \cong \Ind_H^G(\lambda_K),
  \end{equation*}
  where $\lambda_K$ denotes the left-regular representation of $K$. As $\sigma$ (being an irreducible representation of the finite group $K$) is a subrepresentation of $\lambda_K$, the result follows.
\end{proof}

If $\cH$ is a Hilbert space and $\cH_1, \cH_2, \cH_3$ are subspaces with $\cH_2 \sub \cH_1 \cap \cH_3$, we write $\cH_1 \Perp[\cH_2] \cH_3$ if $\cH_1 \ominus \cH_2 \perp \cH_3 \ominus \cH_2$, where $\cH_i \ominus \cH_2$ denotes the orthogonal complement of $\cH_2$ in $\cH_i$.
If we let $p_1, p_2, p_3$ denote the corresponding orthogonal projections, this is equivalent to $p_3 p_1 = p_2 p_1$. Note that $\cH_1 \Perp[\cH_2] \cH_3$ implies that $\cH_2 = \cH_1 \cap \cH_3$.

If $G \actson \cH$ is a unitary representation of $G$ and $A \sub M\eq$, let
\begin{equation}
  \label{eq:defHA}
  \cH_A = \cl{\set{\xi \in \cH : G_{A'} \cdot \xi = \xi \text{ for some finite } A' \sub A}}.
\end{equation}
It is clear that $\cH_A$ is a closed subspace of $\cH$.

\begin{prop}
  \label{p:repr-independent}
  Let $M$ be $\aleph_0$-categorical and $G = \Aut(M)$. Let $A$ and $B$ be algebraically closed subsets of $M\eq$. Then $\cH_{A} \Perp[\cH_{A \cap B}] \cH_{B}$.
\end{prop}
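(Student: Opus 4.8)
The plan is to push the whole statement onto the single representation $\lambda$ on $\ell^2(M\eq)$ furnished by Fact~\ref{f:representations}, where the subspaces $\cH_A$ become completely transparent. Recall first that the relation $\cH_A \Perp[\cH_{A\cap B}] \cH_B$ means $P_B P_A = P_{A\cap B} P_A$, and since $\cH_{A\cap B} \sub \cH_A$ forces $P_{A\cap B} P_A = P_{A\cap B}$, it is just the operator identity $P_B P_A = P_{A\cap B}$ (where $P_A, P_B, P_{A\cap B}$ are the projections onto $\cH_A, \cH_B, \cH_{A\cap B}$).

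The first step is to check that the construction $A \mapsto \cH_A$ is \emph{natural}, so that the statement really only concerns $\lambda$. For finite $A' \sub M\eq$ let $E_{A'}$ be the projection onto the $G_{A'}$-fixed vectors; as $\set{A' \sub A : A' \text{ finite}}$ is directed and the spaces $\cH^{G_{A'}}$ increase with $A'$, the projection $P_A$ is the strong limit $\lim_{A'} E_{A'}$. Any bounded $G$-equivariant operator $T$ commutes with the $G$-action, so $T$ and $T^*$ both preserve each space of $G_{A'}$-fixed vectors; hence $T$ commutes with every $E_{A'}$ and therefore with $P_A$. Applying this to a $G$-equivariant isometry $\iota \colon \cH \to \cH'$ gives $\iota P_A = P'_A \iota$ (and likewise for $B$ and $A\cap B$), so the identity $P_B P_A = P_{A\cap B}$ transfers from $\cH'$ to any subrepresentation $\cH$; it obviously also respects orthogonal direct sums. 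By Fact~\ref{f:representations}, $\cH$ embeds equivariantly into a direct sum of copies of $\lambda$, so it suffices to prove the identity for $\cH = \ell^2(M\eq)$.

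The heart of the matter is then the observation that, for an algebraically closed $A$, one has $\cH_A = \ell^2(A) := \set{f \in \ell^2(M\eq) : \operatorname{supp} f \sub A}$, with $P_A$ equal to multiplication by the indicator $\mathbf 1_A$. Indeed, a $G_{A'}$-fixed $\ell^2$-function is constant on $G_{A'}$-orbits and square-summable, hence supported on the finite ones, which lie in $\acl\eq A' \sub A$; this gives $\cH_A \sub \ell^2(A)$. Conversely, for $e \in A$ the group $G_A$ fixes $e$ (as $e$ is one of the points of $A$ it fixes), and because algebraic and definable closure are finitary for $\aleph_0$-categorical $M$, already some finite $A' \sub A$ fixes $e$; thus $\delta_e$ is $G_{A'}$-invariant and lies in $\cH_A$, whence $\ell^2(A) \sub \cH_A$. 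It is exactly here that algebraic closedness of $A$ is used: for a general $A$ one would only get $\cH_A = \ell^2(\acl\eq A)$.

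With this identification the conclusion is immediate. By Lemma~\ref{l:acl}\ref{i:acl:2} the set $A \cap B$ is again algebraically closed, so $P_A, P_B, P_{A\cap B}$ are the multiplications by $\mathbf 1_A, \mathbf 1_B, \mathbf 1_{A\cap B}$, and from $\mathbf 1_A \mathbf 1_B = \mathbf 1_{A\cap B}$ we read off $P_B P_A = P_{A\cap B}$, as required; note that without $A \cap B$ being algebraically closed the right-hand side would instead be multiplication by $\mathbf 1_{\acl\eq(A\cap B)}$, so this lemma is precisely what makes the two sides agree. The genuinely delicate points are therefore not the final computation but the two ingredients of the reduction: the strong-limit description of $P_A$ and its commutation with intertwiners (to legitimately pass to $\lambda$), together with the finitariness of the closures, which is what guarantees that $\cH_A$ is all of $\ell^2(A)$ rather than a proper subspace spanned by indicators of larger $G_{A'}$-orbits.
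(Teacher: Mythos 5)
Your proof is correct and takes essentially the same approach as the paper: reduction to $\lambda$ acting on $\ell^2(M\eq)$ via Fact~\ref{f:representations} (you spell out the strong-limit and intertwiner argument that the paper compresses into a single sentence), then the identification $\cH_C = \set{\xi : \supp \xi \sub C}$ for algebraically closed $C$, and finally the conclusion via Lemma~\ref{l:acl}\ref{i:acl:2}. The only divergences are cosmetic---you finish with the operator identity $P_B P_A = P_{A \cap B}$ where the paper instead computes $\cH_A \ominus \cH_{A \cap B}$ and $\cH_B \ominus \cH_{A \cap B}$ and checks they are orthogonal---apart from one harmless inaccuracy in an aside: for general (non-closed) $A$ one does not get $\cH_A = \ell^2(\acl\eq A)$, but only the closed span of the indicators of the finite $G_{A'}$-orbits over finite $A' \sub A$, which can be strictly smaller; your argument never uses this claim.
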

\begin{proof}
  As for any subset $C \sub M\eq$, the projection $p_C$ onto $\cH_C$ commutes with 
  direct sums and subrepresentations, by Fact~\ref{f:representations}, we can reduce to the case where $\cH = \ell^2(M\eq)$ and $\pi = \lambda$. If $\xi \in \cH$, we view it as a function $M\eq \to \C$ and we let $\supp \xi = \set{e \in M\eq : \xi(e) \neq 0}$.

  The main observation is the following: if $C \sub M\eq$ is algebraically closed, then
  \begin{equation*}
    \cH_C = \set{\xi \in \cH : \supp \xi \sub C }.
  \end{equation*}
  The $\supseteq$ inclusion follows from the fact that vectors with finite support are dense. For the other inclusion, as the subspace on the right-hand side is closed, it suffices to see that for all finite $C' \sub C$ and all $\xi$ fixed by $G_{C'}$, $\supp \xi \sub C$. Let $e \in M\eq$ be such that $\xi(e) \neq 0$. As $\xi$ is fixed by $G_{C'}$, it must be constant on the orbit $G_{C'} \cdot e$. As $\xi$ is in $\ell^2$, this implies that $G_{C'} \cdot e$ is finite, i.e., $e \in \acl\eq C' \sub C$.

  Now it follows from the hypothesis and Lemma~\ref{l:acl} that
  \begin{align*}
    \cH_A \ominus \cH_{A \cap B} &= \set{\xi \in \cH : \supp \xi \sub A \sminus B} \quad \And \\
    \cH_B \ominus \cH_{A \cap B} &= \set{\xi \in \cH : \supp \xi \sub B \sminus A},
  \end{align*}
  whence the result.
\end{proof}
\begin{remark}
  \label{rem:mt-approach}
  A more model-theoretic treatment of similar ideas, using the formalism of semigroups of projections, can be found in \cite{BenYaacov2018}.
\end{remark}

Now consider a measure-preserving action $G \actson (X, \mu)$, where $(X, \mu)$ is a probability space. As $G$ is not locally compact, one has to take some care how this is defined. We denote by $\MALG(X, \mu)$ the Boolean algebra of all measurable subsets of $X$ with two such sets identified if their symmetric difference has measure $0$. $\MALG(X, \mu)$ is naturally a metric space with the distance between $A$ and $B$ given by $\mu(A \sdiff B)$. We denote by $\Aut(X, \mu)$ the group of all isometric automorphisms of $\MALG(X, \mu)$, that is, the group of all automorphisms of the Boolean algebra that also preserve the measure. $\Aut(X, \mu)$ is naturally a topological group if equipped with the pointwise convergence topology coming from its action on $\MALG(X, \mu)$. If $(X, \mu)$ is \df{standard} (i.e., $X$ is a standard Borel space and $\mu$ is a Borel probability measure), then $\Aut(X, \mu)$ is a Polish group. For us, a \df{measure-preserving action $G \actson (X, \mu)$} will mean a continuous homomorphism $G \to \Aut(X, \mu)$, that is, $G$ acts on measurable sets and measurable functions (up to measure $0$) but not necessarily on points. It is easy to see that if $X$ is standard and one has a jointly measurable action on points $G \actson X$ that preserves the measure $\mu$, then this gives an action in our sense. The converse is also true for non-archimedean groups but this is less obvious (see \cite{Glasner2005a}*{Theorem~2.3}) and we will not need it.

If $\cF_1, \cF_2, \cG$ are $\sigma$-fields in a probability space, we will denote by $\cF_1 \indep[\cG] \cF_2$ the fact that $\cF_1$ and $\cF_2$ are \df{conditionally independent over $\cG$}, i.e., $\E(\xi \mid \cF_2\cG) = \E(\xi \mid \cG)$ for every $\cF_1$-measurable random variable $\xi$. If $\cG$ is trivial, we will write simply $\cF_1 \indep \cF_2$ and will say that $\cF_1$ and $\cF_2$ are independent. We will often make use of the standard facts about conditional independence that go in model theory by the name of \emph{forking calculus}. A general source is \cite{Kallenberg2002}*{Chapter~5} and we will give references to precise statements where needed.

If $G = \Aut(M)$ and a measure-preserving action $G \actson (X, \mu)$ is given, for $A \sub M\eq$, we denote by $\cF_A$ the $\sigma$-field of measurable subsets of $X$ generated by the $G_{A'}$-fixed subsets for all finite $A' \sub A$. The following is the main result of this section. 
\begin{theorem}
  \label{th:general-independence}
  Let $M$ be an $\aleph_0$-categorical structure and let $G = \Aut(M)$. Let $G \actson (X, \mu)$ be any measure-preserving action on a probability space. Then the following hold:
  \begin{enumerate}
  \item \label{i:thgen:1}   For all algebraically closed $A, B \sub M\eq$, we have that $\cF_A \indep[\cF_{A \cap B}] \cF_B$.

  \item \label{i:thgen:2} If $M$ has no algebraicity and admits weak elimination of imaginaries, then for all $A, B \sub M$, we have that $\cF_A \indep[\cF_{A \cap B}] \cF_B$.
  \end{enumerate}
\end{theorem}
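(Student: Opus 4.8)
The plan is to transfer the Hilbert-space statement of Proposition~\ref{p:repr-independent} to the measure-theoretic setting via the Koopman representation, and then, for part~\ref{i:thgen:2}, to reduce to part~\ref{i:thgen:1} by passing to algebraic closures in $M\eq$.

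For part~\ref{i:thgen:1}, I would work with the Koopman representation $G \actson L^2(X, \mu)$ given by $\kappa(g) f = f \circ g^{-1}$ and first establish the dictionary $\cH_A = L^2(X, \cF_A, \mu)$ between the subspaces of \eqref{eq:defHA} and the $\sigma$-fields $\cF_A$. Indeed, if $f$ is fixed by $G_{A'}$ then each level set $\set{f > t}$ is $G_{A'}$-invariant, hence lies in $\cF_A$, so $f$ is $\cF_A$-measurable, giving $\cH_A \sub L^2(\cF_A)$; conversely $\cF_A$ is generated by the algebra of $G_{A'}$-invariant sets (a finite intersection of invariant sets is again invariant), whose indicators are $G_{A'}$-fixed vectors, and since simple functions over a generating algebra are dense in $L^2$, this yields $L^2(\cF_A) \sub \cH_A$. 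Because $A \cap B \sub A, B$, we have $\cF_{A \cap B} \sub \cF_A \cap \cF_B$, so I may invoke the standard $L^2$ criterion for conditional independence (see \cite{Kallenberg2002}): when $\cG \sub \cF_1 \cap \cF_2$, one has $\cF_1 \indep[\cG] \cF_2$ if and only if $L^2(\cF_1) \Perp[L^2(\cG)] L^2(\cF_2)$, this being precisely the projection identity recorded after the definition of $\Perp$. Combining this with the identification above and Proposition~\ref{p:repr-independent} gives $\cF_A \indep[\cF_{A \cap B}] \cF_B$.

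The one genuinely new ingredient needed for part~\ref{i:thgen:2} is that $\cF_A$ depends only on $\acl\eq A$. I would prove the representation-theoretic statement $\cH_A = \cH_{\acl\eq A}$ for an arbitrary unitary representation: by Fact~\ref{f:representations} and the compatibility of the projection onto $\cH_A$ with direct sums and subrepresentations, it suffices to check this for $\cH = \ell^2(M\eq)$, where the computation in the proof of Proposition~\ref{p:repr-independent} shows more precisely that $\cH_C = \set{\xi : \supp \xi \sub \acl\eq C}$ for \emph{any} $C \sub M\eq$; since $\acl\eq$ is idempotent, this expression is unchanged upon replacing $C$ by $\acl\eq C$. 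Specializing to the Koopman representation and using $\cH_A = L^2(\cF_A)$ yields $\cF_A = \cF_{\acl\eq A}$. I would then assemble part~\ref{i:thgen:2} by setting $\tilde A = \acl\eq A$ and $\tilde B = \acl\eq B$, which are algebraically closed subsets of $M\eq$: Lemma~\ref{l:no-algeb-and-welim} gives $\tilde A \cap \tilde B = \acl\eq A \cap \acl\eq B = \dcl\eq(A \cap B)$, and the same lemma applied with both sets equal to $A \cap B$ gives $\acl\eq(A \cap B) = \dcl\eq(A \cap B)$; hence $\cF_{\tilde A \cap \tilde B} = \cF_{\dcl\eq(A \cap B)} = \cF_{\acl\eq(A \cap B)} = \cF_{A \cap B}$, while $\cF_{\tilde A} = \cF_A$ and $\cF_{\tilde B} = \cF_B$. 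Part~\ref{i:thgen:1} applied to $\tilde A, \tilde B$ reads $\cF_{\tilde A} \indep[\cF_{\tilde A \cap \tilde B}] \cF_{\tilde B}$, which after these substitutions is exactly $\cF_A \indep[\cF_{A \cap B}] \cF_B$.

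I expect the main obstacle to be the careful verification of the dictionary $\cH_A = L^2(\cF_A)$ together with the precise form of the $L^2$ conditional-independence criterion, where one must respect the hypothesis $\cG \sub \cF_1 \cap \cF_2$ (here guaranteed by $\cF_{A \cap B} \sub \cF_A \cap \cF_B$). Once this bridge between the representation and the $\sigma$-fields is in place, the model-theoretic content of part~\ref{i:thgen:2} is essentially bookkeeping with $\acl\eq$ and $\dcl\eq$, driven entirely by Lemma~\ref{l:no-algeb-and-welim}.
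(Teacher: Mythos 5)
Your part~(i) is correct and is essentially the paper's own argument: the dictionary $\cH_A = L^2(\cF_A)$ for the Koopman representation, the $L^2$ projection criterion for conditional independence, and Proposition~\ref{p:repr-independent}; the details you supply are exactly the ones the paper leaves implicit. The gap is in part~(ii), precisely in what you call the one genuinely new ingredient: the identity $\cH_C = \set{\xi \in \ell^2(M\eq) : \supp \xi \sub \acl\eq C}$ for \emph{arbitrary} $C \sub M\eq$, and its consequence $\cF_C = \cF_{\acl\eq C}$. The computation in the proof of Proposition~\ref{p:repr-independent} only gives the inclusion $\cH_C \sub \set{\xi : \supp \xi \sub \acl\eq C}$; the reverse inclusion there uses that $C$ is algebraically closed, because it reduces to showing $\delta_e \in \cH_C$ for each $e$ in the support, which holds since $\delta_e$ is fixed by $G_e$ and $\set{e}$ is then a finite subset of $C$. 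For $e \in \acl\eq C \sminus C$ this breaks down, and the general identity is false. Take $M$ a pure countable set, $G = \Sym(M)$, let $E$ be the invariant equivalence relation on $M^2$ identifying $(x,y)$ with $(y,x)$, let $e = [(a,b)]_E$ for distinct $a, b$, and set $C = \set{e}$. Then $a \in \acl\eq C$, since $G_e \cdot a = \set{a,b}$; but every vector of $\ell^2(M\eq)$ fixed by $G_e$ (or by $G$) takes equal values at $a$ and $b$, and this property passes to closures, so $\delta_a \notin \cH_C$. The $\sigma$-field version fails as well: for the Bernoulli action $\Sym(M) \actson ([0,1]^M, \lambda^M)$, the event $\set{z : z(a) < 1/2}$ lies in $\cF_{\acl\eq C}$ but not in $\cF_C$, since every set in $\cF_C$ is invariant in measure under the transposition exchanging $a$ and $b$. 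So ``$\cF_A$ depends only on $\acl\eq A$'' is not a valid principle, and as written your proof of part~(ii) rests on it.

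Fortunately, the instances you actually need are true, and the repair is what the paper does. The identity that holds in general is the one for \emph{definable} closure: $\cF_{\dcl\eq D} = \cF_D$, because any finite $C' \sub \dcl\eq D$ is fixed pointwise by $G_{D'}$ for some finite $D' \sub D$, so $G_{D'} \leq G_{C'}$ and every $G_{C'}$-fixed set is already $G_{D'}$-fixed. Under the hypotheses of part~(ii), Lemma~\ref{l:no-algeb-and-welim} applied with both sets equal to $A$ gives $\acl\eq A = \dcl\eq A$ for $A \sub M$, so the equalities $\cF_{\acl\eq A} = \cF_A$ you use do hold for real sets in this setting --- but via the dcl identity, not via the false $\ell^2$ claim. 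Alternatively, one can dispense with these equalities altogether, which is the paper's actual argument: apply part~(i) to $A' = \acl\eq A$ and $B' = \acl\eq B$, note that Lemma~\ref{l:no-algeb-and-welim} gives $A' \cap B' = \dcl\eq(A \cap B)$ and hence $\cF_{A' \cap B'} = \cF_{A \cap B}$ by the dcl identity, and then conclude using only the trivial inclusions $\cF_A \sub \cF_{A'}$, $\cF_B \sub \cF_{B'}$ together with monotonicity of conditional independence in its two outer arguments.
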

\begin{proof}
  \ref{i:thgen:1}
  Consider the \df{Koopman representation} $G \actson^\pi L^2(X)$ given by
  \begin{equation*}
    (\pi(g) \cdot f)(x) = f(g^{-1} \cdot x), \quad \text{ where } f \in L^2(X), g \in G, x \in X.
  \end{equation*}
  For $C \sub M\eq$, we denote by $L^2(\cF_C)$ the subspace of $L^2(X)$ consisting of all $\cF_C$-measurable functions. Observe that if we write $\cH = L^2(X)$, then $L^2(\cF_C) = \cH_C$ (as defined in \eqref{eq:defHA}). To see the $\sub$ inclusion, note that functions of the form $F(\eta_0, \ldots, \eta_{n-1})$, where $F \colon \R^n \to \C$ is a bounded measurable function and each $\eta_i \colon X \to \R$ is a random variable fixed by $G_{C_i}$ for some finite $C_i \sub C$, are dense in $L^2(\cF_C)$;  each such function is in $L^2$ and is fixed by $G_{\bigcup_i C_i}$, so belongs to $\cH_C$. The inverse inclusion follows from the fact that if $\xi \in \cH_C$ is fixed by $G_{C'}$ for some finite $C' \sub C$, then $\xi$ is measurable with respect to the $\sigma$-field generated by the events $\set[\big]{\set{\xi > r} : r \in \R}$ and all of these belong to $\cF_C$.

  To show the required independence, it suffices to see that for all $\eta_A \in L^2(\cF_A)$, we have that
  \begin{equation*}
    \E(\eta_A \mid \cF_B) = \E(\eta_A \mid \cF_{A \cap B})
  \end{equation*}
  (see, e.g., \cite{Kallenberg2002}*{Proposition~5.6}). Recalling that the conditional expectation $\E(\cdot \mid \cF_C)$ for functions in $L^2$ is just the projection operator onto $\cH_C$, this follows directly from Proposition~\ref{p:repr-independent}.

  \ref{i:thgen:2} Denote $A' = \acl\eq A$, $B' = \acl\eq B$, $C = \dcl\eq (A \cap B)$. By Lemma~\ref{l:acl}, $A'$ and $B'$ are algebraically closed and by Lemma~\ref{l:no-algeb-and-welim}, we have that $C = A' \cap B'$. Now \ref{i:thgen:1} applied to $A'$ and $B'$ yields that $\cF_{A'} \indep[\cF_C] \cF_{B'}$. It only remains to observe that $\cF_C = \cF_{A \cap B}$ and that $\cF_A \sub \cF_{A'}$, $\cF_B \sub \cF_{B'}$.
\end{proof}

Theorem~\ref{th:general-independence}  has the following immediate corollary, which can be viewed as a generalization of the classical theorem of de Finetti.
\begin{cor}
  \label{c:deFinetti}
  Let $M$ be an $\aleph_0$-categorical structure with no algebraicity that admits weak elimination of imaginaries and let $G = \Aut(M)$. Consider a family of random variables $\set{\xi_a : a \in M}$ whose joint distribution is invariant under $G$. Then these variables are conditionally independent over the $G$-invariant $\sigma$-field. If the $G$-invariant $\sigma$-field is trivial and $M$ is transitive, then the $\xi_a$ are \df{i.i.d.} (independent, identically distributed).
\end{cor}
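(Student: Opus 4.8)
The plan is to deduce Corollary~\ref{c:deFinetti} directly from Theorem~\ref{th:general-independence}\ref{i:thgen:2} by translating the statement about random variables into a statement about the $\sigma$-fields $\cF_A$. First I would set up the probability space: given the family $\set{\xi_a : a \in M}$ with $G$-invariant joint distribution, I realize it as a measure-preserving action $G \actson (X, \mu)$ in the sense defined above, where the $\xi_a$ are the coordinate functions and $G$ permutes them according to its action on $M$ (so $g \cdot \xi_a = \xi_{g \cdot a}$ up to measure zero). The $G$-invariance of the joint distribution is exactly what guarantees this is a well-defined continuous homomorphism $G \to \Aut(X,\mu)$. Under this identification, for a subset $A \sub M$ the $\sigma$-field $\gen{\xi_a : a \in A}$ generated by the corresponding variables sits inside $\cF_A$, and the $G$-invariant $\sigma$-field is precisely $\cF_\emptyset$ (the $G$-fixed measurable sets).

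The key step is then to establish pairwise conditional independence and bootstrap it to the full family. Applying Theorem~\ref{th:general-independence}\ref{i:thgen:2} to any two disjoint finite sets $A, B \sub M$ with $A \cap B = \emptyset$ gives $\cF_A \indep[\cF_\emptyset] \cF_B$, since $\dcl\eq(A \cap B) = \dcl\eq \emptyset$ and $\cF_{\dcl\eq \emptyset} = \cF_\emptyset$. To conclude that the entire family $\set{\xi_a : a \in M}$ is conditionally independent over $\cF_\emptyset$, I would use the standard characterization of conditional independence of an arbitrary family: a family of $\sigma$-fields is conditionally independent over $\cG$ iff every finite sub-family is, and for finitely many fields this reduces to the pairwise ``splitting'' condition that each field is conditionally independent from the field generated by the others. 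Concretely, for a finite $F \sub M$ and $a \in F$, taking $A = \set a$ and $B = F \sminus \set a$ (which are disjoint) and invoking the theorem yields $\cF_{\set a} \indep[\cF_\emptyset] \cF_{F \sminus \set a}$; standard forking calculus (e.g.\ from \cite{Kallenberg2002}) then assembles these into full conditional independence of $\set{\xi_a : a \in F}$ over $\cF_\emptyset$, and since this holds for all finite $F$, over the whole family.

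For the final sentence, I would specialize to the case where the $G$-invariant $\sigma$-field is trivial: then $\cF_\emptyset$ is trivial, so conditional independence over $\cF_\emptyset$ becomes genuine (unconditional) independence of the family $\set{\xi_a : a \in M}$. The identical-distribution claim comes from transitivity: if $M$ is transitive, then for any $a, b \in M$ there is $g \in G$ with $g \cdot a = b$, and since $g$ preserves $\mu$ and carries $\xi_a$ to $\xi_b$ (in distribution), the variables $\xi_a$ and $\xi_b$ have the same law. Combined with independence, this gives that the $\xi_a$ are i.i.d.

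I expect the main obstacle to be purely bookkeeping rather than conceptual: the content is entirely carried by Theorem~\ref{th:general-independence}, so the real care lies in (i) justifying that the $G$-invariant joint distribution produces a legitimate measure-preserving action in the precise sense defined in the paper (a continuous homomorphism into $\Aut(X,\mu)$, not merely a pointwise action), and (ii) correctly passing from pairwise conditional independence of disjoint finite sets to conditional independence of the full infinite family. The latter is a standard but slightly delicate application of forking calculus, and the cleanest route is to reduce to finite sub-families and then to the two-sided splitting condition, invoking the general facts about conditional independence that the paper has already announced it will use freely.
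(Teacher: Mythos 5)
Your proposal is correct and takes essentially the same approach as the paper: the paper presents Corollary~\ref{c:deFinetti} as an immediate consequence of Theorem~\ref{th:general-independence}\ref{i:thgen:2}, and your route---realizing the variables as a measure-preserving action in the paper's sense, identifying the $G$-invariant $\sigma$-field with $\cF_\emptyset$, applying the theorem to a singleton versus the rest of a finite set, and assembling full conditional independence by standard forking calculus---is exactly the intended argument (indeed, Remark~\ref{rem:dF-weaker-assumption} confirms that the singleton-versus-complement induction is what the authors have in mind). The deduction of the i.i.d.\ conclusion from triviality of the invariant $\sigma$-field and transitivity is likewise the standard one.
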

\begin{proof}
  Let $a_1, \ldots, a_n, b_1, \ldots, b_m \in M$ with $\set{a_1, \ldots, a_n} \cap \set{b_1, \ldots, b_m} = \emptyset$. Then it follows from Theorem~\ref{th:general-independence} \ref{i:thgen:2} that
  \begin{equation*}
    (\xi_{a_1},\ldots, \xi_{a_n}) \indep[\mathcal{F}_\emptyset] (\xi_{b_1},\ldots, \xi_{b_m})
  \end{equation*}
  and it remains to observe that $\cF_\emptyset$ is precisely the $G$-invariant $\sigma$-field.

  If $M$ is moreover transitive, the variables $\xi_a$ must have the same distribution by $G$-invariance.
\end{proof}

\begin{remark}
  \label{rem:dF-weaker-assumption}
  Using the properties of independence and an inductive argument, it is possible to replace the no algebraicity and weak elimination of imaginaries assumption above with a slightly weaker one. Namely, we only need that $\acl\eq A \cap \acl\eq B = \dcl\eq \emptyset$ for any disjoint $A$ and $B$ where $B$ is a singleton, rather than for arbitrary $A$ and $B$.
\end{remark}

An action $G \actson (X, \mu)$ is called \df{ergodic} if the $G$-invariant $\sigma$-field is trivial. Thus in the case of ergodic actions, one obtains genuine independence in Corollary~\ref{c:deFinetti}.

Another interesting remark is that by virtue of Fraïssé's theorem, Corollary~\ref{c:deFinetti} can be applied even in situations in which there is no obvious group around. This is best illustrated by the following example, which is a well-known theorem of Ryll-Nardzewski~\cite{RyllNardzewski1957}. If $\bar \xi = (\xi_0, \ldots, \xi_{n-1})$ and $\bar \eta = (\eta_0, \ldots, \eta_{n-1})$ are tuples of random variables, we use the notation $\bar \xi \equiv \bar \eta$ to signify that $\bar \xi$ and $\bar \eta$ have the same distribution.
\begin{cor}[Ryll-Nardzewski]
  \label{c:RyllNardzewski}
  Let $\mu$ be a Borel probability measure on $\R^\N$ and denote by $\xi_i \colon \R^\N \to \R$ the projection on the $i$-th coordinate. Suppose that for all $i_0 < \cdots < i_{k-1}$, we have that $(\xi_{i_0}, \ldots, \xi_{i_{k-1}}) \equiv (\xi_0, \ldots, \xi_{k-1})$. Denote by $\phi \colon \R^\N \to \R^\N$ the one-sided shift defined by $\phi(x_0, x_1, \ldots) = (x_1, x_2, \ldots)$ and suppose moreover that $\mu$ is $\phi$-ergodic. Then $\mu$ is a product measure.
\end{cor}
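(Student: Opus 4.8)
The plan is to reinterpret the spreadability hypothesis as invariance under $\Aut(\Q, <)$ and then invoke Corollary~\ref{c:deFinetti}. As recorded after Theorem~\ref{th:intro:deFinetti}, the dense linear order $(\Q, <)$ is $\aleph_0$-categorical, transitive, has no algebraicity, and admits weak elimination of imaginaries, so it meets the hypotheses of the corollary. First I would reindex the sequence by $\Q$: for a finite set $F = \set{q_0 < \cdots < q_{n-1}} \sub \Q$, declare the law of $(\xi_{q_0}, \ldots, \xi_{q_{n-1}})$ to be the law of $(\xi_0, \ldots, \xi_{n-1})$ under $\mu$. The assumption $(\xi_{i_0}, \ldots, \xi_{i_{k-1}}) \equiv (\xi_0, \ldots, \xi_{k-1})$ for increasing indices is exactly what makes these finite-dimensional distributions consistent, so by Kolmogorov's extension theorem there is a measure $\nu$ on $\R^\Q$ realizing them. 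Since every $g \in \Aut(\Q, <)$ maps increasing tuples to increasing tuples, $\nu$ is $\Aut(\Q, <)$-invariant, and the (jointly continuous, measure-preserving) coordinate-permutation action is a measure-preserving action in the sense of the paper. Projecting $\R^\Q$ onto the coordinates $\set{0, 1, 2, \ldots} \sub \Q$ pushes $\nu$ forward to $\mu$, so $\mu$ is a coordinate factor of $\nu$.

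Applying Corollary~\ref{c:deFinetti} with $M = (\Q, <)$ and $G = \Aut(\Q, <)$, the variables $\set{\xi_q : q \in \Q}$ are conditionally independent over the $G$-invariant $\sigma$-field $\cI$; as $M$ is transitive, it remains only to show that $\cI$ is trivial, for then the $\xi_q$ are i.i.d., $\nu = \rho^\Q$ for some law $\rho$ on $\R$, and hence $\mu = \rho^\N$ is a product measure. The main obstacle is precisely this last step: passing from the single hypothesis of $\phi$-ergodicity to the triviality of the invariant $\sigma$-field of the entire group $\Aut(\Q, <)$, which is a far larger amount of invariance than we are handed.

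Here is how I would bridge the gap. Let $T$ be the transformation of $(\R^\Q, \nu)$ induced by the translation $\tau \colon q \mapsto q + 1$, which lies in $G$; since every $G$-invariant set $S$ satisfies $T^k S = S$, it suffices to exploit $T$ alone. Given $S \in \cI$ and $\epsilon > 0$, approximate $S$ by a set $S'$ depending on finitely many coordinates; applying a suitable element of $G$ (which preserves $\nu(S \sdiff \cdot)$ because $S$ is invariant and $\nu$ is $G$-invariant) I may assume $S'$ depends only on coordinates in $\set{0, \ldots, N} \sub \N$. A direct computation shows that for $k \ge 0$ the set $T^k S'$ again depends only on coordinates in $\N$ and, under the projection to $\R^\N$, corresponds to $\phi^{-k} A$, where $A \sub \R^\N$ is the set defining $S'$. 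The von Neumann mean ergodic theorem applied to the Koopman isometry of $\phi$, together with $\phi$-ergodicity of $\mu$, yields $\frac1n \sum_{k=0}^{n-1} \mathbf 1_{\phi^{-k} A} \to \mu(A)$ in $L^2(\mu)$, hence $\frac1n \sum_{k=0}^{n-1} \mathbf 1_{T^k S'} \to \nu(S')$ in $L^2(\nu)$. On the other hand $\norm{\mathbf 1_S - \mathbf 1_{T^k S'}}_{L^2(\nu)}^2 = \nu(S \sdiff T^k S') = \nu(S \sdiff S') \le \epsilon$ for every $k$ by $T$-invariance of $S$ and measure-preservation, so the Cesàro averages stay within $\sqrt\epsilon$ of $\mathbf 1_S$. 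Letting $n \to \infty$ gives $\norm{\mathbf 1_S - \nu(S')}_{L^2(\nu)} \le \sqrt\epsilon$, and since $\epsilon$ is arbitrary (with $\nu(S') \to \nu(S)$ as the approximation improves) this forces $\nu(S) \in \set{0, 1\}$. Thus $\cI$ is trivial, and the argument above finishes the proof.
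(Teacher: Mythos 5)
Your proposal is correct and follows essentially the same route as the paper: reindex the sequence by $\Q$ (the Fraïssé limit of the finite linear orders), apply Corollary~\ref{c:deFinetti}, and reduce triviality of the $\Aut(\Q, <)$-invariant $\sigma$-field to $\phi$-ergodicity by approximating an invariant event with finite-coordinate events, moving them into the coordinates $\N \sub \Q$ by an automorphism, and using that the shift extends to the automorphism $q \mapsto q+1$ of $\Q$. The only difference is in execution of the last step: where the paper notes that an invariant event is then exactly measurable with respect to the $\N$-coordinate $\sigma$-field and hence $\phi$-invariant, you run the mean ergodic theorem on the approximations---a slightly longer but equally valid finish.
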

\begin{proof}
  Here the relevant structure is $(\N, <)$ which has no automorphisms. Its age is the class of finite linear orders. This age amalgamates and its Fraïssé limit is the countable dense linear order without endpoints $(\Q, <)$, which satisfies the hypothesis of Corollary~\ref{c:deFinetti}. Consider the random variables $(\xi_a : a \in \Q)$ whose distribution is defined by
  \begin{equation*}
    (\xi_{a_0}, \ldots, \xi_{a_{k-1}}) \equiv (\xi_0, \ldots, \xi_{k-1}) \quad \text{ for all } a_0 < \cdots < a_{k-1} \in \Q.
  \end{equation*}
  In order to apply Corollary~\ref{c:deFinetti} and conclude, we only need to check that the $\Aut(\Q)$-invariant $\sigma$-field is trivial. Let $\cF_n$ be the $\sigma$-field generated by $\xi_0, \ldots, \xi_{n-1}$. Suppose that $S$ is an $\Aut(\Q)$-invariant event and fix $\eps > 0$. Then there exists $n \in \N$, $a_0 <  \cdots < a_{n-1} \in \Q$ and an event $S_\eps$ measurable with respect to the $\sigma$-field generated by $\xi_{a_0}, \ldots, \xi_{a_{n-1}}$ such that $\bP(S \sdiff S_\eps) < \eps$. Let $g \in \Aut(\Q)$ be such that $g \cdot (a_0, \ldots, a_{n-1}) = (0, \ldots, n-1)$. Then $g \cdot S_\eps$ is $\cF_n$-measurable and by the invariance of $S$, we have that $\bP \big(S \sdiff (g \cdot S_\eps) \big) < \eps$. As $\eps$ was arbitrary, we conclude that $S$ is measurable with respect to the original $\sigma$-field $\bigvee_n \cF_n$. As $\phi$ extends to an automorphism of $\Q$, we obtain that $\phi^{-1}(S) = S$. Now the fact that $\mu$ is $\phi$-ergodic allows us to conclude that $S$ or its complement is $\mu$-null.
\end{proof}


\section{Invariant measures on the space of linear orderings}
\label{sec:invar-meas-space}

In this section, we fix a homogeneous $\aleph_0$-categorical structure $M$ with no algebraicity that admits weak elimination of imaginaries and we let $G = \Aut(M)$. We denote by $\LO(M)$ the space of all linear orders on $M$, that is
\begin{equation*}
  \LO(M) = \set{x \in 2^{M \times M} : x \text{ is a linear order}}.
\end{equation*}
$\LO(M)$ is a closed subset of $2^{M \times M}$ and thus a compact space. If $x \in \LO(M)$, we will use the more traditional infix notation $a <_x b$ instead of $(a, b) \in x$. $\Sym(M)$ (and, in particular, $G$) acts naturally on $\LO(M)$ as follows:
\begin{equation*}
  a <_{g \cdot x} b \iff g^{-1} \cdot a <_x g^{-1} \cdot b.
\end{equation*}
Our goal is to study the $G$-invariant measures on $\LO(M)$. There is always at least one such measure $\muu$ which is, in fact, invariant under all of $\Sym(M)$. It is defined by
\begin{equation*}
  \muu(a_0 <_x \cdots <_x a_{k-1}) = 1/k! \quad \text{ for all distinct } a_0, \ldots, a_{k-1} \in M.
\end{equation*}
Here and below, we employ the usual notation from probability theory and write $a_0 <_x \cdots <_x a_{k-1}$ for the event $\set{x \in \LO(M) : a_0 <_x \cdots <_x a_{k-1}}$. We will call $\muu$ the \df{uniform measure}. Glasner and Weiss~\cite{Glasner2002a} have shown that it is the only measure invariant under the whole symmetric group. The proof is simple: the way the tuple $(a_0, \ldots, a_{k-1})$ is ordered gives a partition of $\LO(M)$ into $k!$ pieces and for every two elements of this partition, there is an element of $\Sym(M)$ that sends one to the other, so they must all have the same measure.

Our main theorem is the following (cf. Theorem~\ref{th:intro:main} in the introduction):
\begin{theorem}
  \label{Thm:Princip} Let $M$ be a transitive, $\aleph_0$-categorical structure with no algebraicity that admits weak elimination of imaginaries and let $G=\Aut(M)$. Consider the action $G \actson \LO(M)$. Then exactly one of the following holds:
  \begin{enumerate}
  \item The action $G \actson \LO(M)$ has a fixed point (i.e., there is a definable linear order on $M$);
  \item $\muu$ is the unique $G$-invariant measure on $\LO(M)$.
  \end{enumerate}
\end{theorem}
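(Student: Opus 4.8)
The two alternatives are mutually exclusive: a fixed point $x_0 \in \LO(M)$ gives the invariant Dirac mass $\delta_{x_0}$, which is distinct from the non-atomic measure $\muu$, so $\muu$ cannot then be the unique invariant measure. It therefore suffices to prove that if the action has no fixed point, then $\muu$ is the only invariant measure. By the ergodic decomposition I may assume that a given invariant measure $\nu$ is ergodic, and by the Glasner--Weiss theorem \cite{Glasner2002a} the proof reduces to showing that $\nu$ is invariant under the full group $\Sym(M)$, i.e. that $\nu$ is \emph{exchangeable} (its finite-dimensional marginals assign probability $1/k!$ to each of the $k!$ orderings of any $k$ distinct points).

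The plan is to feed the order into the de Finetti machinery of Section~\ref{sec:general-de-finett}. Since $M$ is $\aleph_0$-categorical, there are only finitely many $2$-types extending the (unique) $1$-type; let $O$ be the set of these and put $Z = [0,1]^O$. For $c \in M$ I define a position variable $\xi_c \in Z$ by $\xi_c = \big(\E(\mathbf 1_{\{a <_x c\}} \mid \cF_{\{c\}})\big)_{\tp(a,c) \in O}$, where $a$ is any representative of the corresponding $2$-type. The family $(\xi_c)_{c \in M}$ is valued in a standard Borel space, and since each $\xi_c$ is built canonically from $\nu$ and the $G$-invariant $\sigma$-fields $\cF_{\{c\}}$, its joint law is $G$-invariant. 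Applying Corollary~\ref{c:deFinetti} (using that $\nu$ is ergodic and $M$ is transitive) gives that the $\xi_c$ are i.i.d. with some common law $\lambda$ on $Z$. The crucial structural input comes from Theorem~\ref{th:general-independence}\ref{i:thgen:2}: taking the overlapping pairs $A = \{a,c\}$ and $B = \{b,c\}$, whose intersection is $\{c\}$, yields $\cF_{\{a,c\}} \indep[\cF_{\{c\}}] \cF_{\{b,c\}}$; as $\{a <_x c\} \in \cF_{\{a,c\}}$ and $\{b <_x c\} \in \cF_{\{b,c\}}$, the comparisons to a common pivot $c$ are conditionally independent given $\cF_{\{c\}}$, and $\xi_c$ records exactly the conditional biases of these independent comparisons.

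Next I would exploit that $<_x$ is a \emph{linear} order. Conditionally on the positions, the pivoting independence propagates to conditional independence of the individual comparisons, so a triple $a,b,c$ forms a directed $3$-cycle with conditional probability equal to the product of three conditional orientation probabilities. As a linear order contains no $3$-cycle, this product must vanish for every triple, which forces $\{a <_x b\}$ to be an almost surely deterministic function of the positions whenever two independent positions are distinct. The dichotomy then turns on the atoms of $\lambda$. If two independent samples of $\lambda$ are almost surely distinct (in particular if $\lambda$ is non-atomic), the order is governed by a Borel linear order on $(Z,\lambda)$ and is hence a function of the i.i.d. variables $(\xi_c)$; such an order is automatically $\Sym(M)$-invariant, whence $\nu = \muu$ by Glasner--Weiss.

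The hard part is the atomic case, where with positive probability two distinct elements receive equal positions and the order between them is \emph{not} determined by the positions; this is precisely the regime that must produce a fixed point (witness the standard order on $(\Q,<)$, where every position is constant). Here I would argue that an atom of $\lambda$ yields a nontrivial $G$-invariant partition of $M$ on whose classes the order restricts to an invariant suborder, and then iterate the construction. The role of the model-theoretic hypotheses is to turn this a priori \emph{random} refinement into \emph{definable} structure: no algebraicity together with weak elimination of imaginaries, through the identity $\acl\eq A \cap \acl\eq B = \dcl\eq(A\cap B)$ of Lemma~\ref{l:no-algeb-and-welim}, should force the tower of refinements to terminate in a deterministic (point-mass) position, at which stage the order becomes a $G$-invariant linear order on $M$, i.e. a fixed point, contradicting our standing assumption. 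Making this extraction of a genuine definable order rigorous---controlling the ties and ruling out a genuinely randomized order on an atom class---is the main obstacle, and it is exactly where no algebraicity and weak elimination of imaginaries are indispensable.
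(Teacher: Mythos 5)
Your frame is the same as the paper's: the mutual exclusivity remark, the reduction to ergodic measures, the position variables (your $\xi_c$ is just the tuple of the paper's $\eta^\tau_c$ over the finitely many $2$-types), their i.i.d.-ness via Corollary~\ref{c:deFinetti}, and the pivot independence via Theorem~\ref{th:general-independence} all match Lemma~\ref{Lem:i.i.d.} and the setup of Lemma~\ref{Lem:Inters}. But the step that is supposed to carry the non-atomic case --- the ``$3$-cycle'' argument --- has a genuine gap, in fact two. First, the joint conditional independence of the three orientation events given the positions does not follow from Theorem~\ref{th:general-independence}: that theorem gives independences over three \emph{different} base fields ($\cF_a$, $\cF_b$, $\cF_c$), the event $\set{a <_x b}$ lives in $\cF_{\set{a,b}}$ and not in any pivot field, and pairwise conditional independences over varying bases do not combine into a product formula for the triple. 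Second, even granting the product formula, forbidding both directed $3$-cycles only yields $pqr = 0$ and $(1-p)(1-q)(1-r) = 0$ for the conditional orientation probabilities $p,q,r$, which is satisfied by, e.g., $(0, 1/2, 1/2)$; it does not force the comparisons to be deterministic functions of the positions. The paper's replacement for this step is quite different: Lemma~\ref{Lem:Inters} proves a \emph{monotonicity} inequality --- on the event $a < b$ one has $\eta^\tau_a \leq \eta^\tau_b$ --- by observing that $\bOne_{a<b}(\bOne_{c<b} - \bOne_{c<a}) \geq 0$ pointwise (transitivity of the linear order) and taking conditional expectations; this needs a common witness $c \in D_\tau(a) \cap D_\tau(b)$, which need not exist for an arbitrary pair, and upgrading the inequality to the a.s.\ equivalence $a < b \iff \eta^\tau_a < \eta^\tau_b$ (Lemma~\ref{Lem:atomless}) requires the combinatorial Lemma~\ref{Lem:connections} on alternating $\tau$-paths --- infinitely many of the same length with disjoint interiors, which is exactly where transitivity, no algebraicity and weak elimination of imaginaries are used --- together with a Borel--Cantelli-type argument over those paths. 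None of this machinery is present or replaced in your sketch.

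The atomic case is not a proof at all but an acknowledged obstacle, and it is the bulk of the paper's argument. The paper handles it not by extracting an invariant partition of $M$, but by conditioning the measure on the event that all positions equal a fixed atom $p$: formula~\eqref{eq:def:nu} defines $\nu_{\mu,\tau,p}$, which is shown to extend to a $G$-invariant \emph{ergodic} measure on $\LO(M)$ (Lemma~\ref{Lem:measures}, whose ergodicity rests on the independence criterion of Proposition~\ref{p:indep-ergodicity}). This conditioned measure \df{respects} $\tau$ (no point of type $\tau$ over both $a$ and $b$ lies between them, Lemma~\ref{l:respecting-types}), respecting is preserved under further conditioning, and since $\aleph_0$-categoricity gives only finitely many $2$-types, one iterates over all of them: at each stage either the relevant position distribution is non-atomic (and then $\mu_i = \muu$, impossible for $i \geq 1$ by a support argument) or one conditions again. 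The terminal measure respects every $2$-type and is then shown to be a Dirac mass (Lemma~\ref{l:Dirac}, again via $\tau$-paths), i.e., a fixed point. Finally, uniqueness among \emph{all} invariant measures (not just ergodic ones) is obtained from the density of convex combinations of ergodic measures, a step your ergodic-decomposition reduction would also need to justify for this non-locally-compact group. So while your first half reconstructs the paper's setup, the two decisive mechanisms --- the path-based globalization of monotonicity and the conditioning-and-iteration scheme over atoms --- are missing.
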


We describe a method to construct the uniform measure on $\LO(M)$ that will help illustrate our strategy for the proof. Let
\begin{equation*}
  \Omega = \set{z \in [0, 1]^M : z(a) \neq z(b) \text{ for all } a \neq b}
\end{equation*}
and define the map $\pi \colon \Omega \to \LO(M)$ by
\begin{equation}
  \label{eq:def-pi}
  a <_{\pi(z)} b \iff z(a) < z(b) \quad \text{ for } a, b \in M.
\end{equation}
The group $G$ acts naturally on $[0, 1]^M$, $\Omega$ is a $G$-invariant set, and the map $\pi$ is $G$-equivariant. Thus any $G$-invariant measure on $\Omega$ gives rise, via $\pi$, to a $G$-invariant measure on $\LO(M)$. In view of Corollary~\ref{c:deFinetti}, the only $G$-invariant, ergodic measures on $[0, 1]^M$ are of the form $\lambda^M$, where $\lambda$ is a measure on $[0, 1]$. It is clear that $(\lambda^M)(\Omega) = 1$ iff $\lambda$ is non-atomic and in that case, $\pi_*(\lambda^M) = \muu$ (this is true because $\lambda^M$ is $\Sym(M)$-invariant and as we noted above, $\muu$ is the only $\Sym(M)$-invariant measure on $\LO(M)$). What we aim to show below is that if $M$ does not admit a $G$-invariant linear order, then the map $\pi$ is invertible almost everywhere for \emph{any} $G$-invariant, ergodic measure on $\LO(M)$.

Let $\mu$ be an ergodic, $G$-invariant measure on $\LO(M)$. We will use probabilistic notation: we will denote by $<_x$ (or only by $<$ if there is no danger of confusion) a random element of $\LO(M)$ chosen according to $\mu$, by $\bP$ the probability of events and by $\E$ the expectation. If $A$ is an event, we denote by $\bOne_A$ its characteristic function. For every $a \in M$, we denote by $\cF_a$ the $\sigma$-field fixed by $G_a$. 

For every $2$-type $\tau$ and every $a \in M$, we define a random variable $\eta^\tau_a$ by
\begin{equation*}
  \eta^\tau_a = \Pr(c < a \mid \cF_a), \quad \text{ where } \tp(ac) = \tau.
\end{equation*}
The definition above does not depend on $c$ but only on $\tau$. Indeed, if $c' \in M$ is another element with $\tp ac' = \tau$, and $\zeta_a^{\tau} = \Pr(c' < a \mid \cF_a)$, then for every $\xi \in L^2(\cF_a)$, invariance implies that $\ip{\xi, \bOne_{c < a}} = \ip{\xi, \bOne_{c' < a}}$, so $\eta_a^\tau = \zeta_a^\tau$ a.s.

\begin{lemma}
  \label{Lem:i.i.d.}
  The random variables $(\eta^\tau_a)_{a\in M}$ are i.i.d.
\end{lemma}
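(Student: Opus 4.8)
The plan is to view the family $(\eta^\tau_a)_{a \in M}$ as a $G$-invariant system of random variables on the ergodic system $(\LO(M), \mu)$ and then to apply the de Finetti-type Corollary~\ref{c:deFinetti} essentially verbatim. Since $M$ is $\aleph_0$-categorical there are only finitely many $2$-types; writing $T$ for this finite set, it is convenient to bundle the variables over a common index by setting $\bar\eta_a = (\eta^\tau_a)_{\tau \in T}$ for each $a \in M$, a random variable valued in the standard Borel space $[0,1]^T$. It then suffices to prove that $(\bar\eta_a)_{a \in M}$ is i.i.d.

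First I would verify that the family is $G$-equivariant, namely that $g \cdot \eta^\tau_a = \eta^\tau_{g \cdot a}$ for every $g \in G$. As $\mu$ is $G$-invariant, each $g$ acts on the measure algebra by a measure-preserving automorphism, and conditional expectation commutes with such maps, so $g \cdot \E(\bOne_{c < a} \mid \cF_a) = \E(\bOne_{g \cdot c < g \cdot a} \mid g \cdot \cF_a)$. Here $g \cdot \cF_a = \cF_{g \cdot a}$, because $g G_a g^{-1} = G_{g \cdot a}$ carries the $G_a$-fixed sets onto the $G_{g \cdot a}$-fixed sets, and $\tp(g \cdot a, g \cdot c) = \tp(a, c) = \tau$ since $g$ is an automorphism; thus the right-hand side is exactly $\eta^\tau_{g \cdot a}$. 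Equivariance in turn forces the joint law of $(\bar\eta_a)_{a \in M}$ to be invariant under the action of $G$ on the index set $M$: applying $g$ transports $(\bar\eta_a)_a$ to $(\bar\eta_{g \cdot a})_a$ while preserving $\mu$, so the distribution is unchanged.

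With invariance established, I would invoke Corollary~\ref{c:deFinetti} for the family $(\bar\eta_a)_{a \in M}$. Its structural hypotheses (no algebraicity and weak elimination of imaginaries) are in force by the standing assumptions of this section; the $G$-invariant $\sigma$-field is trivial because $\mu$ is ergodic; and $M$ is transitive. The corollary then yields precisely that $(\bar\eta_a)_{a \in M}$ is i.i.d., and a fortiori that $(\eta^\tau_a)_{a \in M}$ is i.i.d. for each fixed $\tau$.

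This is essentially a bookkeeping reduction to Corollary~\ref{c:deFinetti}, so I do not anticipate a genuine obstacle. The one computation that needs care is the equivariance step, and within it the identity $g \cdot \cF_a = \cF_{g \cdot a}$ together with the preservation of the type $\tau$: these are exactly what ensure that $G$ permutes the values $\eta^\tau_a$ in lockstep with the index $a$ rather than scrambling them across indices. I would also keep in mind the already-noted independence of $\eta^\tau_a$ from the auxiliary witness $c$, so that the equivariance statement is unambiguous.
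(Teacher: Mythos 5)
Your proposal is correct and follows exactly the paper's route: the paper's own proof is the one-line observation that the lemma is a direct consequence of Corollary~\ref{c:deFinetti}, and your argument simply supplies the routine verifications (the equivariance $g \cdot \eta^\tau_a = \eta^\tau_{g \cdot a}$ via $g \cdot \cF_a = \cF_{g\cdot a}$, invariance of the joint law, ergodicity giving a trivial invariant $\sigma$-field, and transitivity) that the paper leaves implicit. The bundling over the finitely many $2$-types is harmless extra bookkeeping; for the lemma as stated, fixing a single $\tau$ suffices.
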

\begin{proof}
  This is a direct consequence of Corollary~\ref{c:deFinetti}.
\end{proof}

The following is a basic fact about conditional expectation that we will need.
\begin{lemma}
  \label{l:strictly-positive}
  Let $X \geq 0$ be an integrable random variable, $A$ be an event and $\cF$ be a $\sigma$-field. Suppose that $X > 0$ on $A$. Then $\E(X \mid \cF) > 0$ on $A$ a.s.
\end{lemma}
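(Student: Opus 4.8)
The plan is to prove the contrapositive, or rather to directly show that $\E(X \mid \cF)$ is strictly positive on $A$ by a standard argument about the conditional expectation of nonnegative random variables. The key observation is that the event $\set{\E(X \mid \cF) = 0}$ is itself $\cF$-measurable, and on any $\cF$-measurable set the integral of $X$ equals the integral of $\E(X \mid \cF)$. So first I would define the $\cF$-measurable event
\begin{equation*}
  N = \set{\E(X \mid \cF) = 0}
\end{equation*}
and compute $\E(X \bOne_N)$.

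Since $N \in \cF$, the defining property of conditional expectation gives
\begin{equation*}
  \E(X \bOne_N) = \E\bigl(\E(X \mid \cF) \bOne_N\bigr) = 0,
\end{equation*}
the last equality because $\E(X \mid \cF)$ vanishes on $N$ by definition. As $X \geq 0$, the vanishing of $\E(X \bOne_N) = \E(X \bOne_N)$ forces $X \bOne_N = 0$ almost surely, i.e. $X = 0$ almost everywhere on $N$. In other words, $\bP(N \cap \set{X > 0}) = 0$.

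Now I would combine this with the hypothesis that $X > 0$ on $A$. We have $A \sub \set{X > 0}$ up to a null set, so $\bP(N \cap A) = 0$. Thus almost every point of $A$ lies outside $N$, which is exactly the statement that $\E(X \mid \cF) > 0$ almost everywhere on $A$. This completes the argument.

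I do not expect any serious obstacle here: the statement is a soft, measure-theoretic fact and the whole proof rests on the single idea that $\set{\E(X \mid \cF) = 0}$ is an $\cF$-measurable set on which one can apply the defining identity of conditional expectation. The only point requiring a little care is the passage from $\E(X \bOne_N) = 0$ and $X \geq 0$ to $X = 0$ a.e.\ on $N$, which is just the fact that a nonnegative integrable function with zero integral vanishes almost everywhere.
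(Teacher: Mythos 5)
Your proof is correct and is essentially the same argument as the paper's: both rest on the $\cF$-measurability of the set where $\E(X \mid \cF)$ vanishes, the defining identity of conditional expectation on that set, and the fact that a nonnegative integrable function with zero integral vanishes almost everywhere. The paper merely packages this as a proof by contradiction over a set $C \sub \set{\E(X \mid \cF) = 0}$, whereas you argue directly; the difference is purely presentational.
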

\begin{proof}
  Let $Y = \E(X \mid \cF)$. If the conclusion of the lemma is false, there exists an event $C \sub A$ such that $\Pr(C) > 0$ and $\int_C Y = 0$. In particular, $C \sub \set{Y = 0}$. As the set $\set{Y = 0}$ is in $\cF$, we have:
  \begin{equation*}
    0 < \int_C X \leq \int_{Y = 0} X = \int_{Y = 0} Y = 0,
  \end{equation*}
  contradiction.
\end{proof}

If $\tau$ is a $2$-type and $a \in M$, we define
\begin{equation*}
  D_\tau(a) = \set{b \in M : \tp ab = \tau}.
\end{equation*}
The next lemma is the main tool that allows us to recover the order from the random variables $(\eta^\tau_a)_{a \in M}$.
\begin{lemma}
  \label{Lem:Inters}
  Let the type $\tau$ and $a, b \in M$ be such that $D_\tau(a) \cap D_\tau(b) \neq \emptyset$. Then almost surely,
  \begin{equation*}
    a<b \implies \eta_a^\tau \leq \eta_b^\tau.
  \end{equation*}
  Moreover, for any $c \in D_\tau(a) \cap D_\tau(b)$, we have that almost surely,
  \begin{equation*}
  a < c < b \implies \eta^\tau_a < \eta^\tau_b.
  \end{equation*}
\end{lemma}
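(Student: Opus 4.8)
The plan is to fix a common witness $c \in D_\tau(a) \cap D_\tau(b)$ and to rewrite both $\eta_a^\tau$ and $\eta_b^\tau$ as conditional expectations with respect to one and the same $\sigma$-field, namely $\cG := \cF_{\{a,b\}}$. Since $\tp(ac) = \tp(bc) = \tau$, the common element $c$ lets us write $\eta_a^\tau = \E(\bOne_{c<a} \mid \cF_a)$ and $\eta_b^\tau = \E(\bOne_{c<b} \mid \cF_b)$ using this single $c$; we may assume $a, b, c$ pairwise distinct, the remaining (diagonal) cases being vacuous or trivial. The event $\{a <_x b\}$ is fixed by $G_{\{a,b\}}$ and hence is $\cG$-measurable, which is what will allow transitivity of the order to interact with conditioning.

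The crux of the argument---and the step I expect to be the main obstacle---is the identity $\E(\bOne_{c<a} \mid \cG) = \eta_a^\tau$ (and symmetrically $\E(\bOne_{c<b}\mid\cG) = \eta_b^\tau$). To obtain it I would invoke the conditional independence of Theorem~\ref{th:general-independence}\ref{i:thgen:2} with $A = \{a,c\}$ and $B = \{a,b\}$, so that $A \cap B = \{a\}$ and the theorem yields $\cF_{\{a,c\}} \indep[\cF_a] \cF_{\{a,b\}}$. Since $\bOne_{c<a}$ is $\cF_{\{a,c\}}$-measurable (the event $\{c<a\}$ being $G_{\{a,c\}}$-fixed) and $\cF_a \subseteq \cG$, this conditional independence collapses to $\E(\bOne_{c<a}\mid\cG) = \E(\bOne_{c<a}\mid\cF_a) = \eta_a^\tau$. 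The symmetric choice $A = \{b,c\}$, $B = \{a,b\}$ handles $\eta_b^\tau$. This is where the standing hypotheses of no algebraicity and weak elimination of imaginaries enter, through the use of part~\ref{i:thgen:2}.

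With both quantities now living over the common $\sigma$-field $\cG$, the weak inequality is immediate from transitivity: on $\{a<b\}$ one has $\bOne_{c<a} \leq \bOne_{c<b}$, hence $\bOne_{a<b}\bOne_{c<a} \leq \bOne_{a<b}\bOne_{c<b}$, and applying $\E(\cdot\mid\cG)$ while pulling out the $\cG$-measurable factor $\bOne_{a<b}$ gives $\bOne_{a<b}\,\eta_a^\tau \leq \bOne_{a<b}\,\eta_b^\tau$; that is, $\eta_a^\tau \leq \eta_b^\tau$ almost surely on $\{a<b\}$.

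For the strict inequality I would compute the difference directly. On $\{a<b\}$ we have $\bOne_{c<b} - \bOne_{c<a} = \bOne_{a<c<b}$, so $\bOne_{a<b}(\eta_b^\tau - \eta_a^\tau) = \E(\bOne_{a<c<b}\mid\cG)$. Now I apply Lemma~\ref{l:strictly-positive} with $X = \bOne_{a<c<b}$, the event $\{a<c<b\}$, and $\cF = \cG$: since $X>0$ on $\{a<c<b\}$, its conditional expectation is strictly positive there. As $\{a<c<b\} \subseteq \{a<b\}$, this yields $\eta_b^\tau - \eta_a^\tau > 0$ almost surely on $\{a<c<b\}$, which is exactly the desired implication.
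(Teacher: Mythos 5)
Your proof is correct and is essentially the paper's argument: both rest on Theorem~\ref{th:general-independence} to identify $\eta_a^\tau$ and $\eta_b^\tau$ with conditional probabilities of the events $c<a$ and $c<b$ over a single common $\sigma$-field, and then combine transitivity of the order (via $\bOne_{a<b}(\bOne_{c<b}-\bOne_{c<a}) = \bOne_{a<c<b} \geq 0$) with Lemma~\ref{l:strictly-positive} to obtain the weak and the strict inequalities. The only divergence is that the paper conditions on the join $\cF_a\cF_b$, with respect to which $\bOne_{a<b}$ need not be measurable, and so must invoke additional conditional-independence facts (such as $a < b \indep[\cF_a \cF_b] c < a$) to merge $\E(\bOne_{a<b} \mid \cF_a\cF_b)$ into the product, whereas your choice $\cG = \cF_{\{a,b\}}$ makes $\bOne_{a<b}$ itself $\cG$-measurable and lets you pull it out directly, which streamlines the same computation.
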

\begin{proof}
It follows from Theorem~\ref{th:general-independence} that for distinct $a, b, c \in M$,
\begin{equation*}
a < b, \cF_b \indep[\cF_a] c < a,
\end{equation*}
so, using the chain rule \cite{Kallenberg2002}*{Proposition~5.8}, we obtain that
\begin{equation}
  \label{eq:cond-indep}
  a < b \indep[\cF_a \cF_b] c < a.
\end{equation}
  
  Let $c \in D_\tau(a) \cap D_\tau(b)$. Using the fact that $\cF_a \indep[\cF_b] c < b$ (which follows from Theorem~\ref{th:general-independence} and the chain rule), \eqref{eq:cond-indep}, and their variants obtained by exchanging $a$ and $b$, we have:
  \begin{equation*}
    \begin{split}
      \E(\bOne_{a < b} \mid \cF_a \cF_b) ( \eta^\tau_b - \eta^\tau_a ) &= \E(\bOne_{a < b} \mid \cF_a \cF_b) \big(\Pr(c < b \mid \cF_b) - \Pr(c < a \mid \cF_a) \big) \\
      &= \E(\bOne_{a < b} \mid \cF_a \cF_b) \big(\E(\bOne_{c < b} \mid \cF_b \cF_a) - \E(\bOne_{c < a} \mid \cF_a \cF_b) \big) \\
      &= \E(\bOne_{a < b}(\bOne_{c < b} - \bOne_{c < a}) \mid \cF_a \cF_b ).
    \end{split}
  \end{equation*}
  By Lemma~\ref{l:strictly-positive}, $\E(\bOne_{a < b} \mid \cF_a \cF_b)$ is a.s. strictly positive on the event $a < b$. This implies that on $a < b$, we have that
  \begin{equation*}
    \eta^\tau_b - \eta^\tau_a = \frac{\E(\bOne_{a < b}(\bOne_{c < b} - \bOne_{c < a}) \mid \cF_a \cF_b )}{\E(\bOne_{a < b} \mid \cF_a \cF_b)}.
  \end{equation*}
  The numerator is always non-negative, so $\eta^\tau_a \leq \eta^\tau_b$ on $a < b$. For the second assertion of the lemma, note that on $a < c < b$, we have that $\bOne_{a < b}(\bOne_{c < b} - \bOne_{c < a}) = 1$. Thus Lemma~\ref{l:strictly-positive} applies again and the numerator is also strictly positive on that event.
\end{proof}

We will also need a combinatorial fact about $2$-types. For a $2$-type $\tau$ and $a, b \in M$, we say that $y_0, y_1, \ldots, y_{2n}$ is an  \df{alternating $\tau$-path} (or just a \df{$\tau$-path} for brevity) between $a$ and $b$ if $y_0=a$, $y_{2n}=b$ and $\tp(y_{2i} y_{2i+1}) = \tp(y_{2i+2} y_{2i+1}) = \tau$ for all $i = 0, \ldots, n-1$ and all of the nodes of the path are distinct (see Figure~\ref{fig:tau-path}). The \df{interior} of the path is the collection of all nodes except its endpoints. Being a $\tau$-path is, of course, a $G$-invariant condition.

\begin{figure}
\tikzset{every picture/.style={line width=0.75pt}} 

\begin{tikzpicture}[x=0.75pt,y=0.75pt,yscale=-1,xscale=1]

\draw    (280.36,72.93) -- (319.18,20.89) ;
\draw [shift={(320.38,19.28)}, rotate = 486.72] [color={rgb, 255:red, 0; green, 0; blue, 0 }  ][line width=0.75]    (10.93,-3.29) .. controls (6.95,-1.4) and (3.31,-0.3) .. (0,0) .. controls (3.31,0.3) and (6.95,1.4) .. (10.93,3.29)   ;
\draw    (260.35,72.93) -- (221.52,20.89) ;
\draw [shift={(220.33,19.28)}, rotate = 413.28] [color={rgb, 255:red, 0; green, 0; blue, 0 }  ][line width=0.75]    (10.93,-3.29) .. controls (6.95,-1.4) and (3.31,-0.3) .. (0,0) .. controls (3.31,0.3) and (6.95,1.4) .. (10.93,3.29)   ;
\draw    (160.3,72.93) -- (199.12,20.89) ;
\draw [shift={(200.32,19.28)}, rotate = 486.72] [color={rgb, 255:red, 0; green, 0; blue, 0 }  ][line width=0.75]    (10.93,-3.29) .. controls (6.95,-1.4) and (3.31,-0.3) .. (0,0) .. controls (3.31,0.3) and (6.95,1.4) .. (10.93,3.29)   ;
\draw    (440.44,72.93) -- (479.27,20.89) ;
\draw [shift={(480.46,19.28)}, rotate = 486.72] [color={rgb, 255:red, 0; green, 0; blue, 0 }  ][line width=0.75]    (10.93,-3.29) .. controls (6.95,-1.4) and (3.31,-0.3) .. (0,0) .. controls (3.31,0.3) and (6.95,1.4) .. (10.93,3.29)   ;
\draw    (544,77) -- (502.63,22.03) ;
\draw [shift={(501.43,20.43)}, rotate = 413.03999999999996] [color={rgb, 255:red, 0; green, 0; blue, 0 }  ][line width=0.75]    (10.93,-3.29) .. controls (6.95,-1.4) and (3.31,-0.3) .. (0,0) .. controls (3.31,0.3) and (6.95,1.4) .. (10.93,3.29)   ;
\draw    (380.41,72.93) -- (341.59,20.89) ;
\draw [shift={(340.39,19.28)}, rotate = 413.28] [color={rgb, 255:red, 0; green, 0; blue, 0 }  ][line width=0.75]    (10.93,-3.29) .. controls (6.95,-1.4) and (3.31,-0.3) .. (0,0) .. controls (3.31,0.3) and (6.95,1.4) .. (10.93,3.29)   ;

\draw (400.21,35.51) node [anchor=north west][inner sep=0.75pt]  [rotate=-0.57,xscale=0.75,yscale=0.75] [align=left] {$\displaystyle \cdots $};
\draw (146,74.06) node [anchor=north west][inner sep=0.75pt]  [xscale=0.75,yscale=0.75]  {$a$};
\draw (204.03,5.1) node [anchor=north west][inner sep=0.75pt]  [xscale=0.75,yscale=0.75]  {$y_{1}$};
\draw (162.01,36.5) node [anchor=north west][inner sep=0.75pt]  [xscale=0.75,yscale=0.75]  {$\tau $};
\draw (262.36,75.06) node [anchor=north west][inner sep=0.75pt]  [xscale=0.75,yscale=0.75]  {$y_{2}$};
\draw (323.1,2.74) node [anchor=north west][inner sep=0.75pt]  [xscale=0.75,yscale=0.75]  {$y_{3}$};
\draw (382.42,75.06) node [anchor=north west][inner sep=0.75pt]  [xscale=0.75,yscale=0.75]  {$y_{4}$};
\draw (428.15,73.06) node [anchor=north west][inner sep=0.75pt]  [xscale=0.75,yscale=0.75]  {$y_{2n-2}$};
\draw (483.18,4.21) node [anchor=north west][inner sep=0.75pt]  [xscale=0.75,yscale=0.75]  {$y_{2n-1}$};
\draw (542.5,75.17) node [anchor=north west][inner sep=0.75pt]  [xscale=0.75,yscale=0.75]  {$b$};
\draw (365.12,36.5) node [anchor=north west][inner sep=0.75pt]  [xscale=0.75,yscale=0.75]  {$\tau $};
\draw (277.07,37.4) node [anchor=north west][inner sep=0.75pt]  [xscale=0.75,yscale=0.75]  {$\tau $};
\draw (245.05,36.5) node [anchor=north west][inner sep=0.75pt]  [xscale=0.75,yscale=0.75]  {$\tau $};
\draw (525.2,36.5) node [anchor=north west][inner sep=0.75pt]  [xscale=0.75,yscale=0.75]  {$\tau $};
\draw (442.15,36.5) node [anchor=north west][inner sep=0.75pt]  [xscale=0.75,yscale=0.75]  {$\tau $};

\end{tikzpicture}
\caption{An alternating $\tau$-path between $a$ and $b$}  \label{fig:tau-path}
\end{figure}
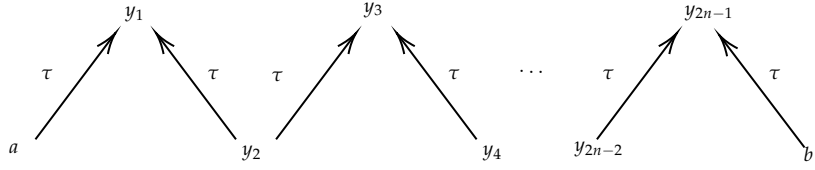

\begin{lemma}
  \label{Lem:connections}
  For all distinct $a, b\in M$ and any $2$-type $\tau$, there is $k \in \N$ such that for any finite $A \sub M$, there is an alternating $\tau$-path of length $k$ from $a$ to $b$ whose interior avoids $A$. In particular, there are infinitely many $\tau$-paths between $a$ and $b$ of length $k$ with pairwise disjoint interiors.
\end{lemma}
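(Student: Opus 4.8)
The plan is to separate the statement into a purely combinatorial \emph{connectivity} assertion and a \emph{genericity} argument that disposes of the finite set $A$ while keeping the length $k$ fixed. Call two elements $u, v \in M$ \emph{$\tau$-adjacent} if $D_\tau(u) \cap D_\tau(v) \neq \emptyset$. Any witness $w$ to this adjacency satisfies $\tp(uw) = \tp(vw) = \tau$, so interleaving such witnesses shows that an alternating $\tau$-path from $a$ to $b$ exists exactly when $a$ and $b$ lie in the same connected component of the $\tau$-adjacency graph. Note first that, since $M$ is transitive and $\tau$ is a $2$-type of distinct elements, each $D_\tau(u)$ is a nonempty $G_u$-orbit not containing $u$, hence infinite by no algebraicity; the same computation shows that for every $w$ the set $\set{u : \tp(uw) = \tau}$ is an infinite $G_w$-orbit. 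I would first establish connectivity (which produces one path, of some length $k$ depending on $a, b, \tau$ but \emph{not} on $A$) and only afterwards move the path off $A$.

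The hard part is connectivity, and this is where the model-theoretic hypotheses enter. One cannot hope that arbitrary $a, b$ are directly $\tau$-adjacent: two adjacent vertices of the homogeneous triangle-free graph have no common neighbour, so genuinely longer paths are needed. Instead I would argue globally. The relation $E$ on $M$ defined by ``$a$ and $b$ lie in the same connected component'' is a $G$-invariant equivalence relation, so each $E$-class is an element of $M\eq$. Pick any $a$ and, using the first paragraph, a $\tau$-adjacent neighbour $a' \neq a$ (take $w \in D_\tau(a)$ and any $a' \neq a$ with $\tp(a'w) = \tau$; then $w \in D_\tau(a) \cap D_\tau(a')$). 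Write $e = [a]_E = [a']_E$. Since $E$ is $G$-invariant, the class of a point is fixed by the stabiliser of that point, so $e \in \dcl\eq(a) \cap \dcl\eq(a') \sub \acl\eq(a) \cap \acl\eq(a')$. As $a \neq a'$, Lemma~\ref{l:no-algeb-and-welim} gives $\acl\eq(a) \cap \acl\eq(a') = \dcl\eq(\emptyset)$, whence $e \in \dcl\eq(\emptyset)$; that is, $G$ fixes the class $[a]_E$ setwise. Because $G$ is transitive on $M$, this forces $[a]_E = M$, so the $\tau$-adjacency graph is connected. We thus obtain a $\tau$-path from $a$ to $b$, which, choosing each witness from the relevant infinite set, we may take to have pairwise distinct nodes. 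This fixes the length $k$.

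It remains to make the interior of a fixed-length path avoid a prescribed finite $A$. Given the path $y_0 = a, \dots, y_{2n} = b$ of length $k = 2n$ from the previous step, I would find $g \in G_{\set{a,b}}$ carrying its interior off $A$ and take the new path to be $g \cdot (y_0, \dots, y_{2n})$; since $g$ fixes $a$ and $b$ and preserves all types, this is again a $\tau$-path from $a$ to $b$ of length $k$, now with interior disjoint from $A$. Such a $g$ exists because $\acl(\set{a,b}) = \set{a,b}$, so the oligomorphic group $G_{\set{a,b}}$ has no algebraicity over $\set{a,b}$ and Neumann's lemma applies to it: the finite set $\set{y_1, \dots, y_{2n-1}}$ (disjoint from $\set{a,b}$) can be translated off $A$ by an element of $G_{\set{a,b}}$. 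Finally, the ``in particular'' clause follows by iteration: having produced paths of length $k$ with pairwise disjoint interiors, apply the first assertion with $A$ enlarged by the union of their interiors to obtain another path of length $k$ whose interior misses all the previous ones.

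The only genuine obstacle is the connectivity step: once the invariant equivalence relation $E$ is collapsed by Lemma~\ref{l:no-algeb-and-welim} together with transitivity, the remaining work is a routine application of homogeneity and no algebraicity.
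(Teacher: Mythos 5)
Your proposal is correct and takes essentially the same route as the paper: connectivity of the alternating-path relation is proved by viewing the reachability classes as a $G$-invariant equivalence relation, collapsing it via no algebraicity and weak elimination of imaginaries (your explicit appeal to Lemma~\ref{l:no-algeb-and-welim} is exactly what the paper's terse "by transitivity and weak elimination of imaginaries" amounts to), and then a fixed path is moved off $A$ by Neumann's lemma applied to $G_{ab}$, with the "in particular" clause following by iteration. The only cosmetic difference is how node-distinctness is ensured --- you choose witnesses generically from infinite witness sets, while the paper concatenates paths and repairs coincidences afterwards using no algebraicity --- but both rest on the same fact.
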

\begin{proof}
Let us first prove that there is an alternating $\tau$-path between $a$ and $b$.
Write $c \sim_\tau d$ if there is an alternating $\tau$-path between $c$ and $d$ or $c = d$. We check that $\sim_\tau$ is an equivalence relation. Symmetry and reflexivity are  clear from the definition. To check transitivity, consider a $\tau$-path $p_1$ from $c_0$ to $c_1$ and a $\tau$-path $p_2$ from $c_1$ to $c_2$ and suppose that $c_0$, $c_1$, and $c_2$ are distinct. By the no algebraicity assumption, there exists an element $g_1 \in G_{\set{c_0, c_1}}$ such that $c_2 \notin g_1 \cdot p_1$. Similarly, there exists $g_2 \in G_{\set{c_1, c_2}}$ such that $g_2 \cdot (p_2 \sminus \set{c_1, c_2}) \cap (g_1 \cdot p_1) = \emptyset$. Now the concatenation of $g_1 \cdot p_1$ and $g_2 \cdot p_2$ witnesses that $c_0 \sim_\tau c_2$.

By transitivity, there is $c \in M$ such that $\tp ac = \tau$. By the no algebraicity assumption, the orbit $G_c \cdot a$ is infinite, so the $\sim_\tau$-class of $a$ is infinite. By transitivity and weak elimination of imaginaries, it follows that the $\sim_\tau$-class of $a$ is all of $M$, so there is an alternating $\tau$-path between $a$ and $b$.

Now fix some alternating $\tau$-path $p$ between $a$ and $b$ and let $k$ be the length of $p$. By the no algebraicity assumption, there is $g \in G_{ab}$ that moves $p$ to a path whose interior is disjoint from $A$.
\end{proof}

Denote by $\lambda^\tau$ the distribution of $\eta^\tau_a$; this is a probability measure on $[0, 1]$ and by Lemma~\ref{Lem:i.i.d.}, it does not depend on $a$.
\begin{lemma}
  \label{Lem:atomless}
  Suppose that the measure $\lambda^\tau$ is non-atomic. Then for all $a, b \in M$, we have that, almost surely,
  \begin{equation*}
    a < b \iff \eta^\tau_a < \eta^\tau_b.
  \end{equation*}
\end{lemma}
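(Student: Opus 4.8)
The plan is to bootstrap the full equivalence from the weak monotonicity that Lemma~\ref{Lem:Inters} supplies on pairs sharing a common $\tau$-neighbour, using non-atomicity both to convert inequalities into strict ones and to produce enough ``generic'' intermediate points. First I would record two preliminary reductions. Since $\lambda^\tau$ is non-atomic and, by Lemma~\ref{Lem:i.i.d.}, the family $(\eta^\tau_a)_{a \in M}$ is i.i.d., for distinct $a,b$ we have $\bP(\eta^\tau_a = \eta^\tau_b) = 0$; as there are only countably many pairs, almost surely the $\eta^\tau_a$ are pairwise distinct. Consequently it suffices to prove that for all $a \neq b$, almost surely $a < b \implies \eta^\tau_a < \eta^\tau_b$: the reverse implication and strictness then follow because $<$ is a linear order and the $\eta^\tau_a$ are a.s. distinct, and the case $b < a$ is symmetric.

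Next I would treat the base case. Call $a,b$ \emph{$\tau$-linked} if $D_\tau(a) \cap D_\tau(b) \neq \emptyset$. For such a pair the first assertion of Lemma~\ref{Lem:Inters} gives, almost surely, $a < b \implies \eta^\tau_a \leq \eta^\tau_b$, and combined with a.s. distinctness this upgrades to $a < b \iff \eta^\tau_a < \eta^\tau_b$. The $\tau$-link relation is $G$-invariant, and by Lemma~\ref{Lem:connections} any two points are joined by an alternating $\tau$-path, whose even-indexed vertices are successively $\tau$-linked; hence the $\tau$-link graph on $M$ is connected, and I would induct on the graph distance $d(a,b)$ in it.

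For the inductive step, assume the equivalence for all pairs at distance $< d$ and let $d(a,b) = d \geq 2$. Choosing $m$ on a shortest $\tau$-link path so that $m$ is $\tau$-linked to $a$ and $d(m,b) = d-1$, note that by no algebraicity the orbit $G_{ab} \cdot m$ is infinite, and every element of it has the same distances to $a$ and $b$ (since $G_{ab}$ preserves the $G$-invariant $\tau$-link graph); so there are infinitely many such midpoints $m_1, m_2, \dots$, and the $\eta^\tau_{m_j}$ are i.i.d. $\lambda^\tau$ and independent of $(\eta^\tau_a, \eta^\tau_b)$. Suppose toward a contradiction that $\bP(a < b,\ \eta^\tau_a > \eta^\tau_b) > 0$. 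A short Fubini computation using non-atomicity shows that, almost surely on $\{\eta^\tau_a > \eta^\tau_b\}$, the interval $(\eta^\tau_b, \eta^\tau_a)$ has positive $\lambda^\tau$-measure, so almost surely on this event some $\eta^\tau_{m_j}$ lands strictly between $\eta^\tau_b$ and $\eta^\tau_a$. For that $m_j$, the base case applied to the $\tau$-linked pair $\{a, m_j\}$ gives $\eta^\tau_{m_j} < \eta^\tau_a \implies m_j < a$, while the induction hypothesis applied to $\{m_j, b\}$ gives $\eta^\tau_b < \eta^\tau_{m_j} \implies b < m_j$; hence $b < m_j < a$, contradicting $a < b$. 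Thus $\bP(a < b,\ \eta^\tau_a > \eta^\tau_b) = 0$, completing the induction.

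The main obstacle is exactly this passage from $\tau$-linked pairs to arbitrary pairs: two linear orders that agree on the edges of a connected graph need not agree on all pairs (a single ``V-shaped'' subconfiguration already fails), so one cannot merely chain the equivalence along an alternating $\tau$-path. The sandwiching argument circumvents this by exploiting that the i.i.d. values $\eta^\tau_{m_j}$ are dense in the support of $\lambda^\tau$ — this is where non-atomicity is indispensable — so that on the hypothetical bad event one can always interpose a point whose order relations to both $a$ and $b$ are already controlled and thereby force a contradiction. The one genuinely measure-theoretic point to verify carefully is that $(\eta^\tau_b, \eta^\tau_a)$ has \emph{positive} (not merely nonnegative) $\lambda^\tau$-measure almost surely, which follows from a small lemma about non-atomic laws, the set of ``gap-straddling'' pairs being $\lambda^\tau \otimes \lambda^\tau$-null.
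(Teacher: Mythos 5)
Your proof is correct, and it reaches the conclusion by a genuinely different route than the paper. The base case is the same in both arguments: for pairs with $D_\tau(a) \cap D_\tau(b) \neq \emptyset$, the contrapositive of Lemma~\ref{Lem:Inters} together with a.s.\ pairwise distinctness of the $\eta^\tau_c$ (non-atomicity plus Lemma~\ref{Lem:i.i.d.}) gives the equivalence. Where you diverge is the extension to arbitrary pairs. The paper does not induct: it fixes an alternating $\tau$-path from $a$ to $b$, notes that conditionally on $\eta^\tau_a < \eta^\tau_b$ the event that \emph{all} even-indexed interior values increase monotonically between them has positive probability, and then uses the ``infinitely many paths with pairwise disjoint interiors'' clause of Lemma~\ref{Lem:connections} to get conditionally independent trials, so that a.s.\ one path interpolates monotonically and the base case chains along it to give $a < b$ in one step. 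You instead induct on distance in the $\tau$-link graph and at each step need only \emph{one} interpolating value: you produce infinitely many equidistant midpoints as the $G_{ab}$-orbit of a geodesic neighbour (no algebraicity), a.s.\ one of their i.i.d.\ values lands in $(\eta^\tau_b, \eta^\tau_a)$, and the base case plus the induction hypothesis sandwich that midpoint strictly between $b$ and $a$, contradicting $a < b$. The trade-off: the paper's argument avoids induction but uses the stronger disjoint-interiors statement and a more demanding per-path event ($n-1$ values landing in increasing order); yours uses only connectivity of the link graph, extracts the infinitely many independent trials directly from no algebraicity, and asks less of each trial, at the cost of the inductive bookkeeping. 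A further point in your favour: you explicitly isolate and justify the ``gap-straddling'' fact --- that $\lambda^\tau\bigl((\eta^\tau_b, \eta^\tau_a)\bigr) > 0$ a.s.\ on the event $\eta^\tau_b < \eta^\tau_a$ --- which both arguments need but which the paper's phrase ``positive probability that only depends on $n$'' passes over silently.
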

\begin{proof}
First, we suppose that $D_\tau (a) \cap D_\tau (b) \neq \emptyset$. The contrapositive of Lemma~\ref{Lem:Inters} gives us that in that case,
\begin{equation}
    \label{eq:l:infinval2}
\eta_a^\tau < \eta_b^\tau \implies a < b.
\end{equation}
Next we consider the general case. Suppose that there exists an alternating $\tau$-path $y_0, \ldots, y_{2n}$ from $a$ to $b$ such that
\begin{equation}
  \label{eq:l:infinval1}
  \eta^\tau_{y_0} < \eta^\tau_{y_2} < \cdots < \eta^\tau_{y_{2n}}.
\end{equation}
Then for all $i$, $D_\tau(y_{2i}) \cap D_\tau(y_{2i+2}) \neq \emptyset$, so by the above observation, we obtain that $a = y_0 < y_2 < \cdots < y_{2n} = b$. Now condition on $\eta^\tau_a, \eta^\tau_b$ and suppose that $\eta^\tau_a < \eta^\tau_b$. As the $\eta^\tau_c$ are i.i.d. with non-atomic distribution, for a fixed $\tau$-path $(y_0, \ldots, y_{2n})$ between $a$ and $b$, \eqref{eq:l:infinval1} holds with positive probability that only depends on $n$. By Lemma~\ref{Lem:connections}, there exist infinitely many $\tau$-paths of the same length between $a$ and $b$ with disjoint interiors and whether \eqref{eq:l:infinval1} holds for each of them are independent events with the same probability. Thus almost surely at least one of them happens and we conclude that \eqref{eq:l:infinval2} holds for all $a, b$. For the reverse implication, it suffices to notice that $\Pr(\eta^\tau_a = \eta^\tau_b) = 0$.
\end{proof}

Lemma~\ref{Lem:atomless} allows us to conclude in the case where $\lambda^\tau$ is non-atomic.
\begin{lemma}
  \label{l:non-atomic}
  Suppose that for some type $\tau$, the distribution $\lambda^\tau$ is non-atomic. Then $\mu = \muu$.
\end{lemma}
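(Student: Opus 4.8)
The plan is to show that if $\lambda^\tau$ is non-atomic, then the map $\pi\colon\Omega\to\LO(M)$ from earlier in the section is essentially a measure-space isomorphism between $(\LO(M),\mu)$ and $([0,1]^M,\lambda^M)$ pushed forward by the random variables $\eta^\tau$. The key object is the map $\Phi\colon\LO(M)\to[0,1]^M$ defined by $\Phi(x)=(\eta^\tau_a(x))_{a\in M}$, which by Lemma~\ref{Lem:i.i.d.} sends $\mu$ to the product measure $(\lambda^\tau)^M$. By Lemma~\ref{Lem:atomless}, for all $a,b\in M$ we have $a<_x b\iff \eta^\tau_a(x)<\eta^\tau_b(x)$ almost surely; in other words, $\Phi$ lands in $\Omega$ almost surely and, composed with $\pi$, recovers the original order, i.e.\ $\pi\circ\Phi=\operatorname{id}$ $\mu$-almost everywhere.

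First I would set $\nu=(\lambda^\tau)^M$, which is a non-atomic product measure on $[0,1]^M$ concentrated on $\Omega$ (non-atomicity of $\lambda^\tau$ guarantees $\nu(\Omega)=1$, exactly as noted in the discussion preceding the lemma), and observe that $\Phi_*\mu=\nu$ since the coordinates $\eta^\tau_a$ are i.i.d.\ with law $\lambda^\tau$. Second, I would invoke the almost-sure identity $\pi\circ\Phi=\operatorname{id}$ from Lemma~\ref{Lem:atomless}: this says that the $\mu$-random order is completely determined by the random variables $(\eta^\tau_a)_a$ via the rule ``order according to the $\eta$-values.'' Pushing forward through $\pi$ then gives
\begin{equation*}
  \mu=(\operatorname{id})_*\mu=(\pi\circ\Phi)_*\mu=\pi_*(\Phi_*\mu)=\pi_*\nu=\pi_*\big((\lambda^\tau)^M\big).
\end{equation*}
Finally, since $(\lambda^\tau)^M$ is $\Sym(M)$-invariant and $\pi$ is $\Sym(M)$-equivariant, $\pi_*\big((\lambda^\tau)^M\big)$ is a $\Sym(M)$-invariant measure on $\LO(M)$; by the Glasner--Weiss uniqueness statement recalled at the start of the section, the only such measure is $\muu$. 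Hence $\mu=\muu$.

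The only genuinely delicate point is the first equality in the displayed chain, namely justifying $\pi\circ\Phi=\operatorname{id}$ $\mu$-a.s.\ as an \emph{honest} identity of maps into $\LO(M)$ rather than merely a coordinatewise statement. Lemma~\ref{Lem:atomless} gives, for each fixed pair $(a,b)$, that $a<_x b\iff\eta^\tau_a(x)<\eta^\tau_b(x)$ off a $\mu$-null set $N_{a,b}$; since $M$ is countable, $N=\bigcup_{a,b}N_{a,b}$ is still null, and off $N$ the order $<_x$ agrees with the order induced by $\Phi(x)$ on \emph{every} pair simultaneously. In particular $\Phi(x)\in\Omega$ for $x\notin N$ (all $\eta$-values are distinct, again using $\lambda^\tau$ non-atomic so that $\Pr(\eta^\tau_a=\eta^\tau_b)=0$), so $\pi(\Phi(x))$ is well-defined and equals $<_x$. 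I expect this bookkeeping with countable unions of null sets to be routine, so the substance of the argument really is the reduction to the uniqueness of the $\Sym(M)$-invariant measure, which the earlier results hand us for free.
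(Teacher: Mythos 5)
Your proof is correct and is essentially identical to the paper's: the paper defines the same map (called $\rho$ there), uses Lemma~\ref{Lem:i.i.d.} to get $\rho_*\mu = (\lambda^\tau)^M$, uses Lemma~\ref{Lem:atomless} to get $\pi \circ \rho = \operatorname{id}$ $\mu$-a.e., and concludes $\mu = \pi_*(\lambda^\tau)^M = \muu$ via the $\Sym(M)$-invariance argument recalled earlier in the section. Your additional care with the countable union of null sets is a detail the paper leaves implicit, but the substance is the same.
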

\begin{proof}
  Define $\rho \colon \LO(M) \to [0, 1]^M$ by $\rho(x)(a) = \eta^\tau_a(x)$ ($\rho$ is defined $\mu$-a.e.). By Lemma~\ref{Lem:atomless}, $\pi \circ \rho = \id$ $\mu$-a.e. ($\pi$ is defined by \eqref{eq:def-pi}). By Lemma~\ref{Lem:i.i.d.}, $\rho_* \mu = (\lambda^\tau)^M$. Applying $\pi$ to both sides, we obtain that
  \begin{equation*}
    \mu = \pi_* \rho_* \mu = \pi_*(\lambda^\tau)^M = \muu. \qedhere
  \end{equation*}
\end{proof}

Now we are left with the case where the distribution $\lambda^\tau$ has atoms for all $2$-types $\tau$ and we will eventually conclude that there is a $G$-invariant linear order on $M$.

From here on, as we will deal with several measures simultaneously, we will incorporate the measure in our notation. If $\mu$ is an ergodic measure on $\LO(M)$, $\tau$ is a $2$-type, and $p \in [0, 1]$ is an atom for the distribution $\lambda^\tau$, we define a new measure $\nu_{\mu, \tau, p}$ on basic clopen sets by
\begin{equation}
  \label{eq:def:nu}
  \Pr_{\nu_{\mu, \tau, p}}(a_0 < \cdots < a_{k-1}) = \Pr_\mu(a_0 < \cdots < a_{k-1} \mid \eta_{a_0}^\tau = \cdots = \eta_{a_{k-1}}^\tau = p),
\end{equation}
where $a_0, \ldots, a_{k-1}$ are pairwise distinct elements of $M$. We note that as by Theorem~\ref{th:general-independence},
\begin{equation*}
  a_0 < \cdots < a_{k-1}, \eta_{a_0}, \ldots, \eta_{a_{k-1}} \indep \eta_{b_0}, \ldots, \eta_{b_{m-1}}
\end{equation*}
for any $\set{b_0, \ldots, b_{m-1}} \cap \set{a_0, \ldots, a_{k-1}} = \emptyset$, we can condition additionally on $\eta_{b_0}^\tau = \cdots = \eta_{b_{m-1}}^\tau = p$ on the right-hand side of \eqref{eq:def:nu} without changing the result.

For the next lemma, we will need the following well-known general ergodicity criterion.
\begin{prop}
  \label{p:indep-ergodicity}
  Let $H$ be a group and let $H \actson (X, \mu)$ be a measure-preserving action. Suppose that the collection
  \begin{equation*}
    \set{A \in \MALG(X, \mu) : \exists h \in H \ h \cdot A \indep A}
  \end{equation*}
  is dense in $\MALG(X, \mu)$. Then the action $H \actson X$ is ergodic.
\end{prop}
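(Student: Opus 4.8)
The plan is to show directly that every $G$-invariant element of $\MALG(X, \mu)$ has measure $0$ or $1$. So I would fix a $G$-invariant set $S$, meaning $g \cdot S = S$ for all $g \in G$, and write $s = \mu(S)$; the goal is to prove $s(1 - s) = 0$.

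First I would exploit the density hypothesis: for every $\eps > 0$ choose $A \in \MALG(X, \mu)$ with $\mu(A \sdiff S) < \eps$ and such that there is some $g \in G$ with $g \cdot A \indep A$. Since $\indep$ here denotes independence over the trivial $\sigma$-field, this means $\mu(g \cdot A \cap A) = \mu(g \cdot A)\, \mu(A)$; and because the action is measure-preserving we have $\mu(g \cdot A) = \mu(A)$, so in fact $\mu(g \cdot A \cap A) = \mu(A)^2$.

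The crucial observation is that the \emph{same} $g$ witnessing independence also almost-fixes $A$, thanks to the invariance of $S$. Indeed, measure-preservation gives $\mu(g \cdot A \sdiff g \cdot S) = \mu(A \sdiff S) < \eps$, and $g \cdot S = S$, so $\mu(g \cdot A \sdiff S) < \eps$. Combining this with $\mu(A \sdiff S) < \eps$ yields $\mu(A \sdiff g \cdot A) < 2\eps$, and therefore $\mu(A \cap g \cdot A) > \mu(A) - 2\eps$. Setting this beside the independence identity gives
\[
  \mu(A)^2 = \mu(A \cap g \cdot A) > \mu(A) - 2\eps,
\]
that is, $\mu(A)\bigl(1 - \mu(A)\bigr) < 2\eps$. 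Since $\lvert \mu(A) - s \rvert \le \mu(A \sdiff S) < \eps$, letting $\eps \to 0$ forces $\mu(A) \to s$ while $\mu(A)\bigl(1 - \mu(A)\bigr) \to 0$, whence $s(1 - s) = 0$ by continuity. Thus $s \in \set{0, 1}$ and the action is ergodic.

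I do not expect a serious obstacle here: the argument is entirely elementary once the right inequalities are recorded. The only point requiring care — and the real content of the statement — is the third step, where invariance of $S$ is used to transfer closeness of $A$ to $S$ into closeness of $A$ to $g \cdot A$, so that the independence relation $\mu(A \cap g \cdot A) = \mu(A)^2$ can be read as an approximate idempotence $\mu(A) \approx \mu(A)^2$ for sets nearly equal to $S$. I would also note that since the hypothesis is phrased for events rather than for $\sigma$-fields, no preliminary reduction is needed; one works directly with $A$ and $g \cdot A$.
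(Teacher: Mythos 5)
Your proof is correct and takes essentially the same approach as the paper's: approximate the invariant set by a set $A$ admitting an independent translate $g \cdot A$, use invariance to get $\mu(A \sdiff g \cdot A) < 2\eps$, and combine this with independence to make $\mu(A)\bigl(1 - \mu(A)\bigr)$ small before letting $\eps \to 0$. The paper merely packages the computation as the exact identity $2(\mu(A) - \mu(A)^2) = \mu(A \sdiff g \cdot A)$ and bounds the right-hand side via the triangle inequality, which is your argument rearranged.
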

\begin{proof}
  Suppose that $B \in \MALG(X, \mu)$ is $H$-invariant. For every $\eps > 0$, there exist $A \in \MALG(X, \mu)$ and $h \in H$ such  that $\mu(A \sdiff B) < \eps$ and $h \cdot A \indep A$. We have that
  \begin{equation*}
    2(\mu(A) - \mu(A)^2) = \mu(A \sdiff h \cdot A) \leq \mu(B \sdiff h \cdot B) + 2\eps = 2\eps.
  \end{equation*}
  Taking a limit as $\eps \to 0$ yields that $\mu(B) - \mu(B)^2 = 0$, so that $\mu(B) = 0$ or $1$.
\end{proof}
\begin{lemma}
  \label{Lem:measures}
  Let $\mu$ be a $G$-invariant, ergodic measure on $\LO(M)$, $\tau$ be a $2$-type, and $p$ be an atom for $\lambda^\tau$. Then $\nu_{\mu, \tau, p}$ extends to a $G$-invariant, ergodic measure on $\LO(M)$.
\end{lemma}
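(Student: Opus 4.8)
The plan is to verify in turn that the prescription \eqref{eq:def:nu} is consistent (so that it extends to a Borel probability measure on $\LO(M)$), that the resulting measure is $G$-invariant, and finally that it is ergodic; the last point is the crux.

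First, since $p$ is an atom of $\lambda^\tau$ and the variables $(\eta^\tau_a)_{a \in M}$ are i.i.d.\ by Lemma~\ref{Lem:i.i.d.}, the conditioning event $\set{\eta^\tau_{a_0} = \cdots = \eta^\tau_{a_{k-1}} = p}$ has probability $\lambda^\tau(\set p)^k > 0$, so the right-hand side of \eqref{eq:def:nu} is well-defined. To see that \eqref{eq:def:nu} extends to a measure, it suffices to check the standard consistency relations for a family of orderings of finite tuples: that the probabilities over all orderings of a fixed finite set sum to $1$ (immediate, since the distinct orderings partition $\LO(M)$), and that inserting an extra point $a_k$ marginalizes correctly. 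The latter is exactly where the independence remark following \eqref{eq:def:nu} enters: conditioning additionally on $\eta^\tau_{a_k} = p$ leaves the right-hand side of \eqref{eq:def:nu} unchanged, after which the event $\set{a_0 <_x \cdots <_x a_{k-1}}$ splits as the disjoint union over the $k+1$ possible positions of $a_k$, giving the marginalization identity. The extension theorem then produces a Borel probability measure, concentrated on $\LO(M)$ because each finite-dimensional marginal is a conditional law of $\mu$ and $\mu$ lives on $\LO(M)$.

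For $G$-invariance, the key computation is the equivariance of $\eta^\tau$: working in the Koopman representation and using that $\mu$ is $G$-invariant, one checks that $\eta^\tau_{ga}(x) = \eta^\tau_a(g^{-1} \cdot x)$, since $\cF_{ga} = g \cdot \cF_a$, $\bOne_{gc <_x ga} = \bOne_{c <_{g^{-1}x} a}$, and $\tp(ga, gc) = \tau$. Consequently the event $\set{\eta^\tau_{ga_i} = p \text{ for all } i}$ is the $g$-translate of $\set{\eta^\tau_{a_i} = p \text{ for all } i}$, and likewise $\set{ga_0 <_x \cdots <_x ga_{k-1}}$ is the $g$-translate of $\set{a_0 <_x \cdots <_x a_{k-1}}$. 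Substituting into the definition of $\nu_{\mu,\tau,p}$ and cancelling the $\mu$-measures of the translated sets (using $G$-invariance of $\mu$) yields $\Pr_{\nu_{\mu,\tau,p}}(ga_0 <_x \cdots <_x ga_{k-1}) = \Pr_{\nu_{\mu,\tau,p}}(a_0 <_x \cdots <_x a_{k-1})$.

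The main point is ergodicity, which I will deduce from Proposition~\ref{p:indep-ergodicity}. The basic clopen events, those depending only on the order restricted to a finite $F \sub M$, are dense in $\MALG(\LO(M), \nu_{\mu,\tau,p})$, so it suffices to produce, for each such event $A$, an element $g \in G$ with $g \cdot A \indep A$ with respect to $\nu_{\mu,\tau,p}$. By Neumann's lemma (the no algebraicity assumption), choose $g$ with $gF \cap F = \emptyset$; then $g \cdot A$ depends only on the order on $gF$. Since $\mu$ is ergodic, $\cF_\emptyset$ is trivial, so Theorem~\ref{th:general-independence}\ref{i:thgen:2} applied to $F$ and $gF$ gives $\cF_F \indep \cF_{gF}$; as for $a, b \in F$ the event $\set{a <_x b}$ is fixed by $G_{\set{a,b}}$ and $\eta^\tau_a$ is $\cF_a$-measurable, both the ordering of $F$ and the family $(\eta^\tau_a)_{a \in F}$ are $\cF_F$-measurable, so the block $(\text{order on } F, (\eta^\tau_a)_{a\in F})$ is $\mu$-independent of the corresponding block over $gF$. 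Writing $E_1 = \set{\eta^\tau_a = p : a \in F}$ and $E_2 = \set{\eta^\tau_a = p : a \in gF}$, the $\mu$-independence of the two blocks makes the conditional probability over the rectangle $E_1 \cap E_2$ factorize:
\[
  \Pr_\mu(A \cap g\cdot A \mid E_1 \cap E_2) = \Pr_\mu(A \mid E_1)\,\Pr_\mu(g \cdot A \mid E_2).
\]
The left-hand side is precisely $\nu_{\mu,\tau,p}(A \cap g \cdot A)$, while the two factors on the right are $\nu_{\mu,\tau,p}(A)$ and $\nu_{\mu,\tau,p}(g \cdot A)$, so $A \indep g \cdot A$ and Proposition~\ref{p:indep-ergodicity} applies. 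I expect the verification of this factorization, i.e.\ making precise that conditioning $\mu$ on $E_1 \cap E_2$ decouples the two blocks, to be the only genuinely delicate step, the remainder being bookkeeping with conditional expectations.
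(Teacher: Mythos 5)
Your proposal is correct and takes essentially the same route as the paper's proof: well-definedness of the finitely additive measure on clopen sets via the independence remark following \eqref{eq:def:nu} together with the Carathéodory extension theorem, $G$-invariance directly from the $G$-invariance of $\mu$, and ergodicity via Proposition~\ref{p:indep-ergodicity} with its hypothesis verified by applying Theorem~\ref{th:general-independence} to $\mu$. The paper's proof is simply a much terser version of yours; the block-factorization you spell out for the ergodicity step is exactly the verification the paper leaves implicit.
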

\begin{proof}
  For brevity, write $\nu = \nu_{\mu, \tau, p}$. To define $\nu$ on a general clopen set $U$, we represent it as a disjoint union of basic clopen sets and use \eqref{eq:def:nu}. It follows from the remark after \eqref{eq:def:nu} that this is well-defined and gives rise to a finitely additive measure on the Boolean algebra of clopen subsets of $\LO(M)$.  Now it follows from the Carathéodory extension theorem that $\nu$ extends to a Borel measure on $\LO(M)$.

  The $G$-invariance of $\nu$ follows from \eqref{eq:def:nu} and the $G$-invariance of $\mu$. Finally, to check ergodicity, we will use Proposition~\ref{p:indep-ergodicity}. If $A = \set{a_0, \ldots, a_{k-1}} \sub M$, let $\cG_A$ be the Boolean algebra generated by the events $\set[\big]{\set{a_i <_x a_j} : i, j < k}$ and note that by regularity of $\nu$, $\bigcup_A \cG_A$ is dense in the measure algebra $\MALG(\LO(M), \nu)$. Now fix $A$ and use the no algebraicity assumption to find $g \in G$ such that $g \cdot A \cap A = \emptyset$. In order to apply Proposition~\ref{p:indep-ergodicity}, we will check that $g \cdot \cG_A \indep_\nu \cG_A$. It suffices to see that for any two permutations $(i_0, \ldots, i_{k-1})$ and $(j_0, \ldots, j_{k-1})$ of $(0, \ldots, k-1)$, the events $E_1 = \set{a_{i_0} <_x \cdots <_x a_{i_{k-1}}}$ and $E_2 = \set{g^{-1} \cdot a_{j_0} <_x \cdots <_x g^{-1} \cdot a_{j_{k-1}}}$ are $\nu$-independent. Let $C_1 = \set{\eta_{a_0}^\tau = \cdots = \eta_{a_{k-1}}^\tau = p}$ and $C_2 = \set{\eta_{g^{-1} \cdot a_0}^\tau = \cdots = \eta_{g^{-1} \cdot a_{k-1}}^\tau = p}$ (here the random variables are computed using $\mu$). It follows from Theorem~\ref{th:general-independence} that Boolean combinations of $E_1, C_1$ are $\mu$-independent from Boolean combinations of $E_2, C_2$. Using this, we have:
  \begin{equation*}
    \begin{split}
      \Pr_\nu(E_1 \cap E_2) &= \Pr_\mu(E_1 \cap E_2 \mid C_1 \cap C_2) \\
        &= \Pr_\mu(E_1 \mid C_1) \Pr_\mu(E_2 \mid C_2) = \Pr_\nu(E_1) \Pr_\nu(E_2).
    \end{split}
  \end{equation*}
  This concludes the proof.
\end{proof}

If $\tau$ is a $2$-type, say that a measure $\mu$ on $\LO(M)$ \df{respects $\tau$} if for all $a, b, c \in M$ such that $\tp ac = \tp bc = \tau$ and $\mu$-a.e. $x \in \LO(M)$, $c$  is not between $a$ and $b$ in the order $<_x$. Note that the uniform measure $\muu$ does not respect any type $\tau$.

\begin{lemma}
  \label{l:respecting-types}
  Let $\mu$ be a $G$-invariant, ergodic measure on $\LO(M)$, $\tau$ be a $2$-type, and $p$ be an atom for $\lambda^\tau$. Let $\nu = \nu_{\mu, \tau, p}$. Then the following hold:
  \begin{enumerate}
  \item \label{i:lrt:1} $\nu$ respects $\tau$;
  \item \label{i:lrt:2} If $\tau'$ is a $2$-type and $\mu$ respects $\tau'$, then $\nu$ respects $\tau'$.
  \end{enumerate}
\end{lemma}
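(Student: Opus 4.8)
The plan is to observe that both parts are statements about the $\nu$-measure of the two basic clopen events $\set{a <_x c <_x b}$ and $\set{b <_x c <_x a}$ (their union being precisely ``$c$ is between $a$ and $b$''), and that by the defining formula \eqref{eq:def:nu} these are directly computable as $\mu$-conditional probabilities. Indeed, applying \eqref{eq:def:nu} to the ordered triple $(a,c,b)$ gives $\Pr_\nu(a < c < b) = \Pr_\mu(a < c < b \mid \eta^\tau_a = \eta^\tau_c = \eta^\tau_b = p)$, and the conditioning event has positive $\mu$-measure because the $\eta^\tau$ are i.i.d.\ (Lemma~\ref{Lem:i.i.d.}) with $p$ an atom of $\lambda^\tau$, so that $\Pr_\mu(\eta^\tau_a = \eta^\tau_b = \eta^\tau_c = p) = \lambda^\tau(\set{p})^3 > 0$. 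Thus in both parts it suffices to show that the numerator $\Pr_\mu(\set{a<c<b} \cap \set{\eta^\tau_a = \eta^\tau_b = \eta^\tau_c = p})$ vanishes, and symmetrically for $b < c < a$; I may then divide by the positive denominator to conclude.

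For \ref{i:lrt:1}, the hypothesis $\tp ac = \tp bc = \tau$ says exactly that $c \in D_\tau(a) \cap D_\tau(b)$, which is the setting of the ``moreover'' clause of Lemma~\ref{Lem:Inters}. That clause gives, $\mu$-almost surely, $a < c < b \implies \eta^\tau_a < \eta^\tau_b$, and applying it with $a$ and $b$ exchanged yields $b < c < a \implies \eta^\tau_b < \eta^\tau_a$. Hence the event $\set{a<c<b} \cap \set{\eta^\tau_a = \eta^\tau_b = p}$ is contained, up to a $\mu$-null set, in $\set{\eta^\tau_a < \eta^\tau_b} \cap \set{\eta^\tau_a = \eta^\tau_b}$, which is empty; so its $\mu$-measure is $0$, and likewise for $b < c < a$. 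Dividing by the positive denominator gives $\Pr_\nu(a<c<b) = \Pr_\nu(b<c<a) = 0$, i.e., $\nu$ respects $\tau$. This is the one genuinely substantive step, as it is where Lemma~\ref{Lem:Inters} is invoked.

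For \ref{i:lrt:2}, the deduction is essentially formal. The hypothesis that $\mu$ respects $\tau'$ means precisely that for $a,b,c$ with $\tp ac = \tp bc = \tau'$ we have $\Pr_\mu(a<c<b) = \Pr_\mu(b<c<a) = 0$. Since conditioning on a positive-measure event cannot turn a $\mu$-null event into a positive one, namely $\Pr_\mu(\set{a<c<b} \cap \set{\eta^\tau_a = \eta^\tau_b = \eta^\tau_c = p}) \leq \Pr_\mu(a<c<b) = 0$, the corresponding $\nu$-probabilities vanish as well, so $\nu$ respects $\tau'$. Note that here we still condition on the $\eta^\tau$ attached to the original type $\tau$ (possibly different from $\tau'$), but this is irrelevant: all that is used is that the conditioned event is already $\mu$-null. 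I do not expect any serious obstacle in either part; the only points requiring minor care are that $a,b,c$ may be taken pairwise distinct (otherwise betweenness is vacuous) and that the conditioning denominator is positive, both of which are immediate from i.i.d.-ness and $p$ being an atom.
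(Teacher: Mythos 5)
Your proof is correct and follows essentially the same route as the paper: part \ref{i:lrt:1} is exactly the paper's argument (apply the ``moreover'' clause of Lemma~\ref{Lem:Inters} to conclude the numerator $\Pr_\mu(a<c<b \And \eta^\tau_a = \eta^\tau_b)$ vanishes, then divide by the positive atom-probability), and part \ref{i:lrt:2} is what the paper dismisses as ``clear from the definition.'' Your explicit justifications of the positive denominator via Lemma~\ref{Lem:i.i.d.} and of the symmetric case $b<c<a$ are welcome but not a departure from the paper's proof.
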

\begin{proof}
  \ref{i:lrt:1} Let $a, b, c \in M$ be such that $\tp ac = \tp bc = \tau$. Using Lemma~\ref{Lem:Inters}, we have that
  \begin{equation*}
    \begin{split}
      \Pr_\nu(a < c < b) = \Pr_\mu(a < c < b \mid \eta^\tau_a = \eta^\tau_b = \eta^\tau_c = p) \\
      \leq \frac{\Pr_\mu(a < c < b \And \eta^\tau_a = \eta^\tau_b)}{\Pr_\mu(\eta^\tau_a = \eta^\tau_b = \eta^\tau_c = p)} = 0.
    \end{split}
  \end{equation*}
  We obtain similarly that $\Pr_\nu(b < c < a) = 0$.

  \ref{i:lrt:2} This is clear from the definition.
\end{proof}

\begin{lemma}
  \label{l:Dirac}
  Suppose that $\mu$ is a $G$-invariant, ergodic measure on $\LO(M)$ which respects all $2$-types. Then $\mu$ is a Dirac measure.
\end{lemma}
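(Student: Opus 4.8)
The plan is to show that the random order is actually deterministic, i.e. that for every pair of distinct $a, c \in M$ the event $\set{a < c}$ has probability $0$ or $1$. Once this is established, the conclusion is immediate: there are only countably many pairs, so almost surely $<_x$ agrees on every pair with one fixed order $<_0$, whence $<_x = <_0$ a.s. and $\mu = \delta_{<_0}$; since $\mu$ is $G$-invariant, $<_0$ is automatically a $G$-invariant (definable) linear order, placing us in the first alternative of Theorem~\ref{Thm:Princip}.

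The heart of the argument is to show that, for distinct $a,c$, the event $\set{a < c}$ lies in the intersection $\cF_a \cap \cF_c$, and this is exactly where the hypothesis that $\mu$ respects all $2$-types is used. Fix distinct $a, c$ and set $\tau = \tp(a, c)$. For any $g \in G_c$ we have $\tp(ga, c) = \tp(ga, gc) = \tau$, so the assumption that $\mu$ respects $\tau$, applied to the triple $(a, ga, c)$, says that a.s. $c$ is not between $a$ and $ga$; equivalently $\bOne_{a < c} = \bOne_{ga < c}$ a.s. Since $g \cdot \set{a < c} = \set{ga < c}$ in $\MALG(\LO(M), \mu)$, this means precisely that $\set{a < c}$ is fixed by $G_c$, i.e. $\set{a < c} \in \cF_c$. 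Running the same computation with the roles of $a$ and $c$ exchanged — now using that $\mu$ respects the reverse type $\tp(c, a)$ — gives $\set{c < a} \in \cF_a$, and taking complements yields $\set{a < c} \in \cF_a$ as well.

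It then remains to invoke the independence machinery of the previous section. Because $a \neq c$, part~\ref{i:thgen:2} of Theorem~\ref{th:general-independence} gives $\cF_a \indep[\cF_\emptyset] \cF_c$, and since $\mu$ is ergodic the $\sigma$-field $\cF_\emptyset$ (the invariant $\sigma$-field) is trivial, so $\cF_a$ and $\cF_c$ are genuinely independent. As $\set{a < c}$ belongs to both $\cF_a$ and $\cF_c$, it is independent of itself, so $\Pr(a < c) = \Pr(a < c)^2$, forcing $\Pr(a < c) \in \set{0, 1}$, as desired.

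The only real content is the second paragraph: transferring the purely combinatorial ``$c$ is not between'' hypothesis into the measure-algebraic statement $\set{a < c} \in \cF_a \cap \cF_c$. Once that is in hand, independence does all the work and the self-independence trick is one line. The points requiring a little care are that the hypothesis must be applied to triples of the correct type — both $\tau$ and its reverse $\tp(c,a)$ are needed, which is why we assume \emph{all} $2$-types are respected — and that the passage from ``$\bOne_{a<c} = \bOne_{ga<c}$ for each $g \in G_c$'' to ``$\set{a<c}$ is $G_c$-invariant in the measure algebra'' is carried out at the level of $\MALG$ rather than pointwise.
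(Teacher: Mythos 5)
Your proof is correct, but it follows a genuinely different route from the paper's. The paper argues combinatorially: from the hypothesis it extracts that a.s. $\tp ce = \tp de$ implies $(c < e \iff d < e)$, and then chains this implication along an alternating $\tau$-path from $a$ to $a'$ avoiding $b$ and $b'$ (supplied by Lemma~\ref{Lem:connections}) to conclude that a.s. the order between any pair is determined by its $2$-type; since there are only finitely many $2$-types, $\mu$ is concentrated on the finite set of type-definable (hence $G$-fixed) orders, and ergodicity then forces a single Dirac mass. You instead convert the hypothesis into the measure-algebraic statement $\set{a < c} \in \cF_a \cap \cF_c$ --- the observation that respecting $\tp(ac)$, applied to the triples $(a, ga, c)$ for $g \in G_c$, says exactly that $g \cdot \set{a<c} = \set{ga < c} = \set{a<c}$ in $\MALG$, i.e.\ that $\set{a<c}$ is $G_c$-fixed --- and then invoke Theorem~\ref{th:general-independence}\ref{i:thgen:2} together with ergodicity (triviality of $\cF_\emptyset$) to get $\Pr(a<c) = \Pr(a<c)^2$ by self-independence, so every pair event is deterministic and $\mu$ is Dirac. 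Your application of the ``respects'' hypothesis is used correctly in both directions (both $\tp(ac)$ and the reverse type $\tp(ca)$ are needed, which is where ``all $2$-types'' enters), and the passage to $\MALG$ is handled properly. What the two approaches buy: yours bypasses the $\tau$-path machinery entirely --- in particular, transitivity of $M$, which Lemma~\ref{Lem:connections} relies on, plays no role in your argument, so your proof of this lemma is marginally more general --- at the cost of leaning on the representation-theoretic independence theorem, whereas the paper's proof of this particular lemma is purely combinatorial once the respect hypothesis is in hand. Both proofs ultimately use ergodicity: yours to trivialize $\cF_\emptyset$, the paper's (implicitly, in the final step it leaves to the reader) to pass from ``supported on finitely many fixed points'' to a single Dirac measure.
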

\begin{proof}
  We will prove that the order between two elements $a, b \in M$ is almost surely determined by $\tp ab$. More formally, we will show that for all $a \neq b$, we have that a.s.,
  \begin{equation*}
    \tp ab = \tp a'b' \implies \big(a < b \iff a' < b' \big).
  \end{equation*}
  What the hypothesis gives us is that for all $c, d, e \in M$, a.s.,
  \begin{equation}
    \label{eq:lD:1}
    \tp ce = \tp de \implies \big(c < e \iff d < e \big).
  \end{equation}
  Let $\tau = \tp ab = \tp a'b'$ and use Lemma~\ref{Lem:connections} to construct a $\tau$-path $a = y_0, \ldots, y_{2n} = a'$ from $a$ to $a'$ whose interior avoids $b$ and $b'$. Applying \eqref{eq:lD:1} consecutively, we obtain that:
  \begin{equation*}
    \begin{split}
      a < b &\iff a < y_1 \iff y_2 < y_1 \iff y_2 < y_3 \iff \cdots \\
      &\iff y_{2n-1} < a' \iff a' < b',
    \end{split}
  \end{equation*}
  which concludes the proof.
\end{proof}

Now we can complete the proof of the theorem.
\begin{proof}[Proof of Theorem~\ref{Thm:Princip}]
  Let $\mu$ be a $G$-invariant, ergodic measure on $\LO(M)$. Enumerate all $2$-types as $\tau_0, \ldots, \tau_{n-1}$. If $\lambda^{\tau_0}_\mu$ is non-atomic, then we can apply Lemma~\ref{l:non-atomic} and conclude that $\mu = \muu$. Otherwise, we construct a sequence of invariant, ergodic measures $\mu_0, \ldots, \mu_n$ such that for all $i < n$, $\mu_i$ respects $\tau_0, \ldots, \tau_{i-1}$ and $\lambda^{\tau_i}_{\mu_i}$ has an atom. Set $\mu_0 = \mu$ and suppose that $\mu_i$ is already constructed. Set $\mu_{i+1} = \nu_{\mu_i, \tau_i, p_i}$, where $p_i$ is some atom for $\lambda_{\mu_i}^{\tau_i}$. By Lemma~\ref{l:respecting-types}, $\mu_{i+1}$ respects $\tau_0, \ldots, \tau_i$. Moreover, $\lambda_{\mu_{i+1}}^{\tau_{i+1}}$ must have an atom: otherwise, by Lemma~\ref{l:non-atomic}, $\mu_{i+1} = \muu$, which is not possible because $\muu$ has full support and $\mu_{i+1}$ does not (as $\mu_{i+1}$ respects $\tau_i$). Finally, apply Lemma~\ref{l:Dirac} to conclude that $\mu_n$ is a Dirac measure, which, by invariance, implies that the action $G \actson \LO(M)$ has a fixed point.

  Thus we have proved that either $\muu$ is the unique ergodic, invariant measure on $\LO(M)$ or there is a fixed point for the action. However, as convex combinations of ergodic measures are dense in the space of all invariant measures (see, e.g., \cite{Phelps2001}*{Section~12}), this implies that in that case, $\muu$ is indeed the unique invariant measure.
\end{proof}

\begin{proof}[Proof of Corollary~\ref{c:intro:amenability}]
  Let $Z \sub \LO(M)$ be any minimal subsystem. By the hypothesis, $Z$ is not a point and it is a proper subset of $\LO(M)$. If $G$ is amenable, then there must be a $G$-invariant measure supported on $Z$, which contradicts Theorem~\ref{Thm:Princip}.
\end{proof}

\begin{proof}[Proof of Corollary~\ref{c:intro:Hrushovski}]
  By \cite{Kechris2007a}*{Proposition~6.4}, the Hrushovski property implies that there are compact subgroups $K_0 \leq K_1 \leq \cdots$ of $G$ with $\bigcup_n K_n$ dense in $G$. In particular, $G$ is amenable.

  If $K \leq G$ is any compact subgroup, then the orbits of $K$ on $M$ are finite. If $M$ admits a $G$-invariant linear order, then the $K$-orbits must be trivial, so $K$ is trivial. We conclude that there is no $G$-invariant linear order on $M$, so, by Corollary~\ref{c:intro:amenability}, the action $G \actson \LO(M)$ must be minimal. This implies that $M$ has the ordering property (see \cite{NguyenVanThe2013}*{Theorem~4}).
\end{proof}


\section{Examples and other invariant measures}
\label{sec:examples}

\subsection{Examples}
\label{sec:examples-1}

We briefly discuss some examples that show that the assumptions of Theorem~\ref{th:intro:deFinetti} and Theorem~\ref{th:intro:main} are mostly necessary. This section requires more familiarity with Fraïssé theory than the rest of the paper.

\subsubsection{Transitivity} Let $\cL$ be a language with two unary predicates $P$ and $Q$ and consider the age consisting of all $\cL$-structures for which $P \cap Q = \emptyset$ and every point satisfies either $P$ or $Q$. Let $M$ be its Fraïssé limit. Then one can randomly order $M$ as follows. Let $(\xi_a : a \in M)$ be uniform, i.i.d. on $[0, 1]$ and define an $\Aut(M)$-invariant random order $<$ on $M$ by declaring all elements of $P$ to be smaller than all elements of $Q$ and $a < b \iff \xi_a < \xi_b$ if $a$ and $b$ both belong to $P$ or to $Q$. This shows that the transitivity assumption in Theorem~\ref{th:intro:main} is necessary.
  
\subsubsection{No algebraicity} Let $V$ be the countable-dimensional vector space over $\bF_2$, the field with two elements. Let $V^*$ be its dual: the space of linear maps from $V$ to $\bF_2$. $V^*$ embeds as a subspace of $\bF_2^V$ and, being a compact group, has a Haar measure which is invariant under the action of $\Aut(V)$. This gives an invariant, ergodic measure on $\bF_2^V$ which is not a product measure and shows that one cannot omit the no algebraicity assumption in Theorem~\ref{th:intro:deFinetti}.

The same example also shows that this assumption cannot be omitted in Theorem~\ref{th:intro:main}. The universal minimal flow of $\Aut(V)$ is a proper subspace of $\LO(V)$ (see \cite{Kechris2005}*{Theorem~8.2}) and it carries a (unique) invariant measure \cite{Angel2014}*{Section~10}. This measure can be obtained as a factor of the measure on $\bF_2^V$ constructed above.

\subsubsection{Weak elimination of imaginaries} In the presence of $\Aut(M)$-invariant equivalence relations on $M$, it is easy to construct distributions for $(\xi_a : a \in M)$ for which the random variables are not independent. One can, for example, toss a coin for each equivalence class and set $\xi_a = 0$ or $1$ depending whether the coin toss for the class of $a$ resulted in heads or tails.

In view of Remark~\ref{rem:dF-weaker-assumption}, it is more interesting to ask whether the weak elimination of imaginaries assumption can be replaced just by requiring \df{primitivity} of the action $\Aut(M) \actson M$, that is, the absence of invariant equivalence relations on $M$. (This would also have the advantage of being much easier to check.) It turns out that the answer is negative, as the following example shows.

Let the signature $\cL$ consist of two unary relations $S_0$ and $S_1$ and a binary relation $R$. We consider the class $\cA$ of finite bipartite graphs viewed as $\cL$-structures, where the two parts of the graphs are labeled by $S_0$ and $S_1$ and $R$ is the edge relation. For a point $a$, we denote by $R(a)$ the set of $R$-neighbors of $a$. For elements of $\cA$, we require furthermore that the degree of every point in $S_0$ is $2$ and that $|R(a) \cap R(b)| \leq 1$ for all $a \neq b$ in $S_0$. It is easy to check that this is an amalgamation class; let $N$ be the Fraïssé limit. Denote $M = \set{a \in N : S_0(a)}$ and $P = \set{a \in N : S_1(a)}$. The class $\cA$ is not hereditary, so $N$ is not fully homogeneous but we do have homogeneity for algebraically closed, finite substructures of $N$. A finite substructure $A \sub N$ is algebraically closed iff for every $a \in A \cap M$, the degree of $a$ (calculated in $A$) is $2$ (that is, $A \in \cA$).

Now consider $M$ as a structure on its own (in a different signature) with relations given by the traces of all definable relations on $N$. As $N$ is $\aleph_0$-categorical, $M$ is too. Using the homogeneity of $N$, it is easy to check that $M$ has no algebraicity and that the action $\Aut(M) \actson M$ is primitive. Indeed, the action of $\Aut(M)$ on pairs of distinct elements of $M$ has exactly two orbits: $\set{(a, b) : |R(a) \cap R(b)| = 1}$ and $\set{(a, b) : R(a) \cap R(b) = \emptyset}$ and none of them is an equivalence relation.

There is a homomorphism $\Aut(N) \to \Aut(M)$ given by the natural action of $\Aut(N)$ on $M$. As the elements of $P$ can be recovered as imaginary elements of $M$, it turns out that this homomorphism is an isomorphism. With all of this in mind, it is easy to construct non-independent, $\Aut(M)$-invariant distributions of random variables $(\xi_a : a \in M)$. For example, we can start with i.i.d. $(\eta_b : b \in P)$ uniform in $[0, 1]$ and define
\begin{equation*}
  \xi_a = \min \set{\eta_b : b \in P, a \eqrel{R} b}.
\end{equation*}
This also allows to construct non-uniform, invariant measures on $\LO(M)$: just define a random order $<$ on $M$ as usual by $a < b \iff \xi_a < \xi_b$.

\subsubsection*{$\aleph_0$-categoricity.} We do not know whether $\aleph_0$-categoricity is necessary in either Theorem~\ref{th:intro:deFinetti} or Theorem~\ref{th:intro:main}, although it is crucial for our proofs. In the absence of $\aleph_0$-categoricity, however,  the other assumptions may need tweaking as the correspondence between model theory and permutation groups breaks down.

\subsection{Other invariant measures on $\LO$}
\label{sec:other-invar-meas}

One may ask, under the assumptions of Theorem~\ref{Thm:Princip}, what other ergodic, invariant measures there are on $\LO(M)$ apart from the uniform measure and fixed points. A slight variation of the method we used to construct $\muu$ yields the following. Let $\lambda$ be a probability measure on $[0, 1]$ and let $S = \set{z \in [0, 1] : \lambda(\set{z}) > 0}$ be the set of its atoms (it can be finite or countable). Let $F \sub \LO(M)$ be the set of $G$-fixed points (which, by Theorem~\ref{Thm:Princip}, has to be non-empty if we want to construct anything interesting) and finally, let $f \colon S \to F$ be an arbitrary function. Note that $\aleph_0$-categoricity of $M$ implies that $F$ is finite. Let $\pi \colon [0, 1]^M \to \LO(M)$ be defined ($\lambda^M$-a.e.) by
\begin{equation*}
  a <_{\pi(z)} b \iff z(a) < z(b) \Or \big( z(a) = z(b) \And a <_{f(z(a))} b \big).
\end{equation*}
Then $\pi_*(\lambda^M)$ is an invariant, ergodic measure on $\LO(M)$.

For many structures $M$, the methods we developed for the proof of Theorem~\ref{Thm:Princip} can be used to show that all ergodic, invariant measures on $\LO(M)$ can be obtained as above; however, in the presence of definable cuts, more complicated constructions are possible. We just give one example.

Consider the language $\cL = \set{<, f}$, where $<$ is a binary relation and $f$ is a unary function. Let $\cA$ be the age consisting of all finite $\cL$-structures where $<$ is interpreted as a linear order and $f$ is an involution without fixed points. It is easy to check that $\cA$ amalgamates; let $N$ be its Fraïssé limit. As for every $n$, the structure generated by $n$ points is of size at most $2n$ and there are only finitely many structures of any given finite size, $N$ is $\aleph_0$-categorical. Let $M = \set{a \in N : f(a) < a}$ and $M' = \set{a \in N : f(a) > a}$. It follows from homogeneity that $M$ and $M'$ are the two orbits of the action $\Aut(N) \actson N$. Now consider $M$ as a structure on its own with relations defined as the traces of definable relations from $N$ (in particular, the relations $a < b$, $f(a) < b$, $f(a) < f(b)$ for $a, b \in M$ are definable in the structure $M$). From a permutation group perspective, we can consider the homomorphism $\pi \colon \Aut(N) \to \Sym(M)$ given by the natural action $\Aut(N) \actson M$ and then $\Aut(M) = \cl{\pi(\Aut(N))}$ (this is because $\Aut(M)$ and $\Aut(N)$ have the same orbits on $M^k$ for every $k$). It follows from the homogeneity of $N$ that $M$ is transitive, $\aleph_0$-categorical, and has no algebraicity. (The algebraic closure operator in $N$ is given by $\acl(A) = A \cup f(A)$.) One can also verify weak elimination of imaginaries, for example using the criterion from \cite{Rideau2019}*{Proposition~10.1}.

We can construct an invariant measure on $\LO(M)$ as follows. Let $(\eta_a)_{a \in M}$ be a collection of i.i.d., Bernoulli, $\set{0, 1}$-valued random variables, where each of the two values is taken with probability $1/2$, and define a random order $\prec$ on $M$ by
\begin{equation*}
  a \prec b \iff f^{\eta_a}(a) < f^{\eta_b}(b),
\end{equation*}
where $f^0 = \id$ and $f^1 = f$. This random order is different from the ones considered above.

\bibliography{meas-LO}
\end{document}